\newtheorem{theorem}{Theorem}  
\newtheorem{lemma}[theorem]{Lemma}
\newtheorem{proposition}[theorem]{Proposition}
\newtheorem{remar}[theorem]{Remark}
\renewenvironment{proof}{Proof:\ \ \ }{\QED}
\newcommand{\QED}{\qed}
\newcommand{\bfind}[1]{\index{#1}{\sl #1}}
\newcommand{\n}{\par\noindent}
\newcommand{\sn}{\par\smallskip\noindent}
\newcommand{\mn}{\par\medskip\noindent}
\newcommand{\pars}{\par\smallskip}
\newcommand{\parm}{\par\medskip}
\newcommand{\subsetuneq}{\mathrel{\raisebox{.8ex}{\footnotesize%
$\displaystyle\mathop{\subset}_{\not=}$}}}
\newcommand{\R}{\mathbb R}
\newcommand{\Q}{\mathbb Q}
\newcommand{\N}{\mathbb N}
\newcommand{\Z}{\mathbb Z}
\newcommand{\cB}{\mathcal B}
\newcommand{\cN}{\mathcal N}
\newcommand{\cal}{\mathcal}
\begin{document}
\title[Fixed Point Theorems]{A common generalization of metric,
ultrametric and topological fixed point theorems\\[.3cm]
\normalsize\it --- alternative version --- }

\author{Katarzyna Kuhlmann and Franz-Viktor Kuhlmann}
\address{Department of Mathematics \& Statistics, University of
Saskatchewan, 106 Wiggins Road, Saskatoon, SK, S7N 5E6, Canada}
\email{fvk@math.usask.ca}
\address{Institute of Mathematics, Silesian University, Bankowa 14,
40-007 Katowice, Poland}
\email{kmk@math.us.edu.pl}
\date{March 22, 2013}
\thanks{The authors wish to thank the referee for his extremely careful
reading and several very helpful and essential corrections and
suggestions. They also thank Ed Tymchatyn, Murray Marshall and John
Martin for inspiring discussions.\\
\indent
The research of the first author was partially supported
by a sabbatical grant from the Silesian University at Katowice, Poland.
The research of the second author was partially supported
by a Canadian NSERC grant.}
\subjclass[2000]{Primary 54A05; Secondary 12J15, 12J25, 13A18, 54C10}

\begin{abstract}
We present a general fixed point theorem which can be seen as the
quintessence of the principles of proof for Banach's Fixed Point
Theorem, ultrametric and certain topological fixed point theorems. It
works in a minimal setting, not involving any metrics. We demonstrate
its applications to the metric, ultrametric and topological cases, and
to ordered abelian groups and fields.
\end{abstract}
\maketitle


%
%
\section{Introduction}
What is the common denominator of Banach's Fixed Point Theorem and its
ultrametric and topological analogues as developed in
\cite{[P],[PR1],[PR2],[KU4]} and in \cite{[SWJ]}? Is there a general
principle of proof that works for all of these worlds, the (ordinary)
metric, ultrametric and topological, and beyond? In this paper, we give
an answer to these questions. We draw our inspiration from the notions
of ``ball'' and ``spherical completeness'' that are used in the
ultrametric world.

S.~Priess' paper \cite{[P]} in which she first proved a fixed point
theorem for ultrametric spaces initiated an interesting development that
led to a better understanding of important theorems in valuation theory
and to new results (see, e.g., [KU4]). This was achieved by extracting
the underlying principle of the proof of Hensel's Lemma through
abstracting from the algebraic operations and only considering the
ultrametric induced by the valuation. In this paper we push this
development one step further by extracting the underlying principle of
various fixed point theorems. In this way, a general framework is set
up that helps understand these theorems in a more conceptual manner and
to transfer ideas from one world to the other by analogies (as we will
demonstrate, for instance, for the topological fixed point theorem we
consider).

The general framework also helps to make the use of fixed point theorems
available to situations that are difficult or even impossible to subsume
under the above mentioned settings. While investigating spaces of real
places, we found that in certain algebraic entities, it may be much
easier and more natural to define ``balls'' than to define the
``distance'' between two elements. For example, if we are dealing with
quotient topologies, like in the case of spaces of real places where the
topology is induced by the Harrison topology of spaces of orderings,
balls come up naturally as images of certain open or closed sets in the
inducing topology. Therefore, we will work with \bfind{ball spaces}
$(X,\cB)$, that is, sets $X$ with a nonempty set $\cB$ of distinguished
nonempty subsets of $X$, which we call \bfind{balls}. We require no
further structure on these spaces. We do not even need a topology
generated in some way by the balls. But let us mention that the way we
formulate our theorems, the balls should be considered closed, rather
than open, in such a topology, because singletons appear and are
important. One can reformulate everything in an ``open ball'' approach,
but this makes the exposition less elegant.

We found the idea of centering the attention on balls, rather than
metrics, in the paper \cite{tt}. But there, ball spaces carry much more
structure and the conditions for a fixed point theorem are
unnecessarily restrictive.

We will now state our most general fixed point theorem for ball spaces.
We need two notions. A \bfind{nest of balls} in $(X,\cB)$ is a
nonempty collection of balls in $\cB$ that is totally ordered by
inclusion. If $f:X\rightarrow X$ is a function, then a subset
$B\subseteq X$ will be called \bfind{$f$-contracting} if it is either a
singleton containing a fixed point or satisfies $f(B)\subsetuneq B$.

\begin{theorem}                             \label{NFPT1}
Take a function $f$ on a ball space $(X,\cB)$ which satisfies the following
conditions:
\sn
{\bf (C1)} \ there is at least one $f$-contracting ball,\n
{\bf (C2)} \ for every $f$-contracting ball $B\in\cB$, the image $f(B)$
contains an $f$-contracting ball,\n
{\bf (C3)} \ the intersection of every nest of $f$-contracting balls
contains an $f$-contracting ball.
\sn
Then $f$ admits a fixed point.
\end{theorem}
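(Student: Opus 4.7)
The plan is to apply Zorn's Lemma to the poset of $f$-contracting balls ordered by inclusion, showing that a minimal such ball must witness a fixed point. Let $\cB_f\subseteq\cB$ denote the collection of all $f$-contracting balls. By (C1), $\cB_f$ is nonempty. I order $\cB_f$ by inclusion and aim for a minimal element.

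To invoke Zorn, I need every chain in $(\cB_f,\subseteq)$ to have a lower bound. A chain here is exactly a nest of $f$-contracting balls, so (C3) supplies a ball $B^*\in\cB_f$ contained in the intersection of the nest. Since $B^*$ lies in the intersection, $B^*\subseteq B$ for every member $B$ of the nest, so $B^*$ is a lower bound in $\cB_f$. Hence by Zorn's Lemma there exists a minimal element $B_0\in\cB_f$.

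Now I examine $B_0$ using the dichotomy in the definition of $f$-contracting. Either $B_0=\{x_0\}$ is a singleton with $f(x_0)=x_0$, in which case the desired fixed point is $x_0$, or $f(B_0)\subsetuneq B_0$. In the second case, I apply (C2): $f(B_0)$ contains some $f$-contracting ball $B_1\in\cB_f$. Then
\[
B_1\;\subseteq\;f(B_0)\;\subsetuneq\; B_0,
\]
so $B_1$ is strictly smaller than $B_0$ in $\cB_f$, contradicting minimality. Thus the first alternative must hold and $f$ has a fixed point.

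The only delicate point I anticipate is the book-keeping around the two clauses of the definition of ``$f$-contracting'' when applying (C2) and (C3); in particular I must avoid assuming $f(B)\subsetuneq B$ as if it were part of the definition. The argument above handles this cleanly because the singleton-fixed-point case of the dichotomy immediately yields the conclusion, while the strict-contraction case is precisely what is needed to derive the contradiction with minimality of $B_0$.
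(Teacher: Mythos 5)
Your proof is correct and is essentially the paper's argument in dual form: the paper applies Zorn's Lemma to the poset of nests of $f$-contracting balls to obtain a maximal nest and then uses (C3) and (C2) to force a singleton fixed point inside its intersection, while you apply Zorn directly to the poset of $f$-contracting balls (with (C3) certifying that every nonempty chain has a lower bound and (C1) giving nonemptiness) to obtain a minimal ball and reach the same contradiction via (C2). The two steps that carry the proof are identical, so this is the same approach; your version is marginally more streamlined since it avoids verifying that adjoining the new ball to the maximal nest again yields a nest.
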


We can obtain uniqueness of the fixed point by strengthening the
hypothesis:

\begin{theorem}                             \label{NFPT2}
Take a function $f$ on a ball space $(X,\cB)$ which satisfies the
following conditions:
\sn
{\bf (CU1)} \ $X$ is an $f$-contracting ball,\n
{\bf (CU2)} \ for every $f$-contracting ball $B\in\cB$, the image $f(B)$
is again an $f$-con\-tracting ball,\n
{\bf (CU3)} \ the intersection of every nest of $f$-con\-tracting balls
is again an $f$-con\-tracting ball.
\sn
Then $f$ has a unique fixed point.
\end{theorem}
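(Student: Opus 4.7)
The plan is to derive existence directly from Theorem~\ref{NFPT1} and then to prove uniqueness by means of a transfinite descending construction that traps every fixed point inside a single singleton ball.

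First, observe that (CU1)--(CU3) strengthen (C1)--(C3): the ball $X$ itself is $f$-contracting; for any $f$-contracting $B$, the image $f(B)$ is $f$-contracting and hence contains one (namely itself); and the intersection of a nest of $f$-contracting balls is itself $f$-contracting, hence contains one. So Theorem~\ref{NFPT1} yields a fixed point, and the substantive task is uniqueness.

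For uniqueness, I define by transfinite recursion a sequence $(B_\alpha)$ of subsets of $X$ by $B_0 = X$, $B_{\alpha+1} = f(B_\alpha)$, and $B_\lambda = \bigcap_{\alpha < \lambda} B_\alpha$ at limit ordinals $\lambda$. I claim that each $B_\alpha$ is an $f$-contracting ball: (CU1) gives the base case; (CU2) handles the successor step; and for the limit step one first notes that $f(B) \subseteq B$ whenever $B$ is $f$-contracting (in either alternative of the definition), so the $B_\alpha$ form a descending chain, hence a nest in $\cB$, and (CU3) applies. A standard cardinality argument shows the sequence cannot strictly decrease forever; let $\alpha_0$ be the first ordinal with $B_{\alpha_0+1} = B_{\alpha_0}$, so $f(B_{\alpha_0}) = B_{\alpha_0}$. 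Since $B_{\alpha_0}$ is $f$-contracting and does not satisfy $f(B_{\alpha_0}) \subsetuneq B_{\alpha_0}$, it must be a singleton $\{x^*\}$ with $f(x^*) = x^*$.

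To finish, I show that every fixed point $y$ equals $x^*$. A transfinite induction establishes $y \in B_\alpha$ for every $\alpha \le \alpha_0$: the base case $y \in X$ is trivial; the successor step uses $y = f(y) \in f(B_\alpha) = B_{\alpha+1}$; and the limit step is immediate from the definition of intersection. Setting $\alpha = \alpha_0$ forces $y \in \{x^*\}$, as required. The main subtlety, and the reason the strengthened hypotheses are needed rather than (C1)--(C3), is to guarantee at every stage that $B_\alpha$ is genuinely a ball in $\cB$ and not merely an $f$-contracting subset of $X$; this is precisely what (CU1)--(CU3) provide.
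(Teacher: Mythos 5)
Your proposal is correct and follows essentially the same route as the paper: the same transfinite descending chain $X\supseteq f(X)\supseteq f(f(X))\supseteq\cdots$ with intersections at limit stages, kept inside $\cB$ by (CU1)--(CU3), stabilizing at a singleton fixed point, and the same trapping argument for uniqueness (the paper phrases it contrapositively, you phrase it directly). The only cosmetic difference is your preliminary appeal to Theorem~\ref{NFPT1} for existence, which is valid but redundant once the transfinite construction is carried out.
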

These theorems will be proved in Section~\ref{sectproof}.

\pars
In \cite{[SWJ]} the authors show that every ``$J$-contraction'' on a
connected compact Hausdorff space $X$ has a unique fixed point. Using
the inspiration from our general framework, we obtain the following
strong generalization; note that we do not require the space $X$ to be
Hausdorff.

\begin{theorem}                             \label{topn}
Take a compact space $X$ and a closed function $f:X\rightarrow X$.
If every nonempty closed set $B$ in $X$ with $f(B)\subseteq B$ contains
a closed $f$-contracting subset, then $f$ has a fixed point in $X$. If
every nonempty closed set $B$ in $X$ with $f(B)\subseteq B$ is
$f$-contracting, then $f$ has a unique fixed point in $X$.
\end{theorem}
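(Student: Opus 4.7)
The plan is to derive this result from Theorems \ref{NFPT1} and \ref{NFPT2} by equipping $X$ with the ball space
\[
\cB \;:=\; \{\, B\subseteq X : B \text{ is nonempty, closed, and } f(B)\subseteq B\,\}.
\]
With this choice an $f$-contracting member of $\cB$ is either a singleton containing a fixed point of $f$ or a nonempty closed set $B$ with $f(B)\subsetuneq B$, which is exactly the class of sets the hypothesis speaks about.

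For the existence part I would verify (C1)--(C3). Condition (C1) is immediate: $X\in\cB$, so the hypothesis supplies an $f$-contracting closed subset, and such a subset automatically lies in $\cB$. For (C2), let $B\in\cB$ be $f$-contracting; since $f$ is a closed function, $f(B)$ is closed and nonempty, and $f(B)\subseteq B$ forces $f(f(B))\subseteq f(B)$, so $f(B)\in\cB$, and the hypothesis gives an $f$-contracting closed subset inside $f(B)$. For (C3), take a nest of $f$-contracting balls; its intersection is nonempty because a totally ordered family of nonempty closed sets in the compact space $X$ has the finite intersection property, and it is closed and stable under $f$, so it lies in $\cB$. The hypothesis then supplies an $f$-contracting subset inside this intersection.

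The uniqueness assertion runs along exactly the same lines, applying Theorem \ref{NFPT2} under the stronger hypothesis. The three constructions above show that $X$ itself is $f$-contracting (giving (CU1)), that $f(B)$ is $f$-contracting whenever $B$ is (giving (CU2)), and that the intersection of any nest of $f$-contracting balls is $f$-contracting (giving (CU3)), because in each case the set is shown to be a nonempty closed $f$-stable subset of $X$ and the hypothesis then applies directly.

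The two essential topological inputs are compactness, which keeps nest intersections nonempty in (C3)/(CU3), and the closedness of $f$, which keeps $f(B)$ inside $\cB$ in (C2)/(CU2); neither the Hausdorff property nor any continuity beyond ``closed'' is needed. I do not anticipate a real obstacle: the work lies almost entirely in recognizing the right ball space, after which the verification of (C1)--(C3) and (CU1)--(CU3) is essentially a restatement of the hypothesis of Theorem \ref{topn}.
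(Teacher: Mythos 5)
Your proposal is correct and follows essentially the same route as the paper: verify (C1)--(C3) for Theorem~\ref{NFPT1} and (CU1)--(CU3) for Theorem~\ref{NFPT2}, using closedness of $f$ to keep images closed and compactness (Lemma~\ref{cHsc}) to keep nest intersections nonempty. The only cosmetic difference is that you restrict the ball space to nonempty closed $f$-invariant sets, whereas the paper takes all nonempty closed sets; since every $f$-contracting set already satisfies $f(B)\subseteq B$, this changes nothing in the argument.
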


This theorem and the theorem of \cite{[SWJ]} are corollaries to
Theorems~\ref{NFPT1} and~\ref{NFPT2}. We will show this in
Section~\ref{secttop}, where we will also present another version of
Theorem~\ref{topn} that is directly related to Theorem~\ref{GFPT2}
below.

\parm
For most applications of these theorems, it is an advantage to have a
handy criterion for the existence of the $f$-contracting balls. From
classical fixed point theorems we know the assumption that the function
$f$ be strictly contracting. But we have learnt from the ultrametric
case that if one does not insist on uniqueness, one can relax the
conditions: a function does not need to be strictly contracting on the
whole space, but only on the orbits of its elements (and simply
contracting otherwise). While this relaxation makes the formulation of
the conditions a bit longer, it should be noted that it is important for
many applications in which the function under consideration fails for
natural reasons to be strictly contracting on the whole space. The
following way to present a fixed point theorem may seem unusual, but it
turns out to be very close to several applications as it encodes (a
weaker form of) the property ``strictly contracting on orbits''.

Consider a function $f:X\rightarrow X$. We will write $fx$ for $f(x)$ and
$f^i x$ for the image of $x$ under the $i$-th iteration of $f$, that is,
$f^0x=x$, $f^1x=f(x)$ and $f^{i+1}x=f(f^ix)$. The function $f$ will be
called \bfind{strongly contracting on orbits} if there is a function
\[
X\ni x\>\mapsto\>B_x\in {\cal B}
\]
such that for all $x\in X$, the following conditions hold:
\sn
{\bf (SC1)} \ $x\in B_x\,$, \n
{\bf (SC2)} \ $B_{fx}\subseteq B_x\,$, and if $x\ne fx$, then
$B_{f^ix}\subsetuneq B_x$ for some $i\geq 1$.
\sn
Note that (SC1) and (SC2) imply that $f^i x\in B_x$ for all $i\geq 0$.

We will say that a nest of balls $\cN$ is an \bfind{$f$-nest} if $\cN=
\{B_x\mid x\in S\}$ for some set $S\subseteq X$ that is closed under
$f$. Now we can state our third main theorem:

\begin{theorem}                             \label{GFPT2}
Take a function $f$ on a ball space which is strongly contracting on
orbits. If for every $f$-nest $\cN$ in this ball space there is some
$z\in \bigcap\cN$ such that $B_z\subseteq\bigcap\cN$, then $f$ has a
fixed point.
\end{theorem}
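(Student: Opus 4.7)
The plan is to invoke Zorn's lemma on the collection of all $f$-nests in $(X,\cB)$, ordered by set inclusion, and extract a fixed point from a maximal $f$-nest. This is the natural substitute for a direct appeal to Theorem~\ref{NFPT1}, because the balls $B_x$ need not themselves be $f$-contracting in the sense required there: from (SC1) and (SC2) one only obtains $f^ix\in B_x$ for all $i\geq 0$, not $f(B_x)\subseteq B_x$, so the machinery of Theorem~\ref{NFPT1} does not apply out of the box.

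First I would verify that the poset of $f$-nests is nonempty and closed under unions of chains. Pick any $x_0\in X$; the orbit $S_0=\{f^ix_0:i\geq 0\}$ is $f$-closed, and (SC2) makes $\{B_{f^ix_0}:i\geq 0\}$ totally ordered by inclusion, so this is an $f$-nest. For a chain of $f$-nests $\cN_\alpha=\{B_x:x\in S_\alpha\}$, the set $\bigcup_\alpha S_\alpha$ is $f$-closed, and any two balls in $\bigcup_\alpha\cN_\alpha$ already lie in a common $\cN_\alpha$, hence are comparable, so the union is again an $f$-nest. Zorn then delivers a maximal $f$-nest $\cN=\{B_x:x\in S\}$, and the hypothesis of the theorem provides some $z\in\bigcap\cN$ with $B_z\subseteq\bigcap\cN$.

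Next I would show that this $z$ is a fixed point. Iterating (SC2) gives $B_{f^iz}\subseteq B_z$ for every $i\geq 0$. Put $S':=S\cup\{f^iz:i\geq 0\}$, which is $f$-closed. For $x\in S$ we have $B_{f^iz}\subseteq B_z\subseteq\bigcap\cN\subseteq B_x$, and the $B_{f^iz}$ are linearly ordered among themselves, so $\cN':=\{B_x:x\in S'\}$ is an $f$-nest extending $\cN$. Maximality yields $\cN'=\cN$, so $B_{f^iz}\in\cN$ and consequently $B_z\subseteq\bigcap\cN\subseteq B_{f^iz}$ for every $i\geq 0$. But if $z\neq fz$, then (SC2) would furnish some $i\geq 1$ with $B_{f^iz}\subsetuneq B_z$, contradicting what we just established. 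Hence $fz=z$.

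The main obstacle I anticipate is the one flagged in the first paragraph: invariance of $B_x$ under $f$ is not part of the hypothesis, so the neat reduction to Theorem~\ref{NFPT1} via the collection $\{B_x:x\in X\}$ is unavailable. The Zorn-style maximality argument on $f$-nests is what compensates for this: it turns the bare existence statement in the hypothesis into a rigidity statement $B_z\subseteq B_{f^iz}$ for all $i$, which directly clashes with the strict inclusion in (SC2) unless $z$ is already a fixed point.
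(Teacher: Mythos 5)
Your proposal is correct and follows essentially the same route as the paper: Zorn's Lemma applied to the poset of $f$-nests (seeded by orbit nests, closed under unions of chains), followed by the observation that a non-fixed $z$ with $B_z\subseteq\bigcap\cN$ would let one adjoin the balls $B_{f^iz}$ and properly enlarge the maximal nest. The only cosmetic difference is that you first conclude $\cN'=\cN$ and derive the rigidity $B_z\subseteq B_{f^iz}$ before invoking the strict inclusion from (SC2), whereas the paper contradicts maximality directly; the two are logically identical.
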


Theorem~\ref{GFPT2} does not deal with the question of uniqueness of
fixed points; this question is answered in the particular applications
by additional arguments that are often very easy.

The condition about the intersection of an $f$-nest is not needed for
Banach's Fixed Point Theorem and may therefore appear alien to readers
who are not familiar with the ultrametric case. But there, as in the
case of non-archimedean ordered groups and fields, one has to deal with
jumps that one could intuitively think of as being a ``non-archimedean''
or ``non-standard'' phenomenon. The obstruction is that the intersection
of an infinite nest of balls we have constructed may contain more than
one element, at which point we have to iterate the construction. The
mentioned condition makes this work.

\pars
The condition on the intersection of $f$-nests implies that in
particular, they are not empty. This reminds of a similar property of
ultrametric spaces, and we take over the corresponding notion. The ball
space $(X,\cB)$ will be called \bfind{spherically complete} if every
nonempty nest of nonempty balls has a nonempty intersection.

To illustrate the flexibility of the concepts we have introduced and the
above explained idea of making fixed point theorems available to totally
new settings, we state the following easy but useful result:

\begin{proposition}
Take two ball spaces $(X_1,\cB_1)$ and $(X_2,\cB_2)$ and a function
$f:X_1\rightarrow X_2\,$. Suppose that the preimage of every ball in
$\cB_2$ is a ball in $\cB_1\,$. If $\cN$ is a nest of balls in
$(X_2,\cB_2)$, then the preimages of the balls in $\cN$ form a nest of
balls in $(X_1,\cB_1)$. If $(X_1,\cB_1)$ is spherically complete, then
so is $(X_2,\cB_2)$.
\end{proposition}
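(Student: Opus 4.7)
The proposition breaks into two independent assertions, and I would handle them in order.

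For the first assertion, the plan is simply to observe that the preimage operation $f^{-1}$ preserves inclusions: if $B \subseteq B'$ in $\cN$, then $f^{-1}(B) \subseteq f^{-1}(B')$. Hence if $\cN$ is totally ordered by inclusion, so is $\{f^{-1}(B) \mid B \in \cN\}$. By hypothesis each $f^{-1}(B)$ lies in $\cB_1$, and since balls are by definition nonempty subsets, these preimages are nonempty balls. The collection is nonempty because $\cN$ is nonempty, so it is a nest of balls in $(X_1,\cB_1)$.

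For the second assertion, I would take an arbitrary nonempty nest $\cN$ of (nonempty) balls in $(X_2,\cB_2)$ and prove $\bigcap\cN \neq \emptyset$. By the first part, $\cN' := \{f^{-1}(B) \mid B \in \cN\}$ is a nest of balls in $(X_1,\cB_1)$. Since $(X_1,\cB_1)$ is spherically complete, $\bigcap\cN'$ is nonempty; pick $x \in \bigcap_{B \in \cN} f^{-1}(B)$. Then $f(x) \in B$ for every $B \in \cN$, so $f(x) \in \bigcap\cN$, proving the intersection is nonempty. In essence I am using the identity $\bigcap_{B \in \cN} f^{-1}(B) = f^{-1}\bigl(\bigcap_{B \in \cN} B\bigr)$, so the nonemptiness of the left side forces nonemptiness of the right.

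There is no real obstacle here; the only subtlety worth noting is that the hypothesis ``$f^{-1}(B)$ is a ball in $\cB_1$'' implicitly requires each such preimage to be nonempty (since balls are nonempty by convention), which in particular ensures that the image of $f$ meets every $B \in \cB_2$. Without this built-in nonemptiness, the argument would collapse, since $\bigcap\cN'$ could be empty for trivial reasons. Once this point is flagged, the proof is a one-line application of the commutation of preimage with intersection.
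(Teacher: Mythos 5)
Your proof is correct, and since the paper states this proposition without proof (calling it ``easy but useful''), your argument is precisely the intended one: preimages preserve inclusion, spherical completeness of $(X_1,\cB_1)$ gives a point in $\bigcap_{B\in\cN}f^{-1}(B)$, and its image lies in $\bigcap\cN$. Your remark that the hypothesis forces each $f^{-1}(B)$ to be nonempty (because balls are nonempty by the paper's definition), so that $f$ meets every ball of $\cB_2$, correctly identifies the one point on which the whole statement hinges.
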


\pars
In several applications, and in particular in the ultrametric setting,
the function under consideration has in a natural way stronger
properties than we have used so far. What we have asked for one element
in the intersection of an $f$-nest is often satisfied by every element
in the intersection. Therefore, it seems convenient to introduce a
notion which reflects this property and in this way to separate it from
the condition that the intersections is non-empty.
The function $f$ will be called \bfind{self-contractive} if in
addition to (SC1) and (SC2), it satisfies:
\sn
{\bf (SC3)} \ if $\cN$ is an $f$-nest and if $z\in\bigcap \cN$, then
$B_z\subseteq\bigcap \cN$.
\sn
Self-contractive functions will appear in the hypothesis of
Theorem~\ref{AT1}, in Theorem~\ref{top3}, in Theorem~\ref{SUFPT}, and in
the proof of Banch's Fixed Point Theorem. The following fixed point
theorem is an easy corollary to Theorem~\ref{GFPT2}:

\begin{theorem}                             \label{GFPT1}
Every self-contractive function on a spherically complete ball space has
a fixed point.
\end{theorem}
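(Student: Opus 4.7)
The plan is to deduce this directly from Theorem~\ref{GFPT2} by verifying its two hypotheses.

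First I would observe that being self-contractive includes conditions (SC1) and (SC2), which are exactly the defining conditions of ``strongly contracting on orbits.'' So the first hypothesis of Theorem~\ref{GFPT2} holds for free, and the only thing left to check is the condition on $f$-nests.

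Next I would take an arbitrary $f$-nest $\cN$ in $(X,\cB)$. By the definition of a nest of balls, $\cN$ is a nonempty collection of balls, each of which is nonempty. Since $(X,\cB)$ is spherically complete, this forces $\bigcap\cN \ne \emptyset$. Pick any element $z \in \bigcap\cN$. Now condition (SC3) of self-contractiveness applies directly to $\cN$ and $z$, yielding $B_z \subseteq \bigcap\cN$. This is precisely the statement required by the hypothesis of Theorem~\ref{GFPT2}.

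Having verified both hypotheses, Theorem~\ref{GFPT2} immediately produces a fixed point of $f$, completing the proof. The argument is essentially two lines: spherical completeness hands us some $z$ in the intersection, and (SC3) upgrades the mere existence of $z$ to the ball condition $B_z \subseteq \bigcap\cN$ needed downstream. There is no real obstacle here; the point of introducing the notion of self-contractiveness was exactly to package (SC3) as a stand-alone strengthening so that the combination with spherical completeness gives the $f$-nest hypothesis of Theorem~\ref{GFPT2} for free.
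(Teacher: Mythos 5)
Your proof is correct and follows exactly the route the paper intends: the paper states Theorem~\ref{GFPT1} as ``an easy corollary to Theorem~\ref{GFPT2},'' and your two-step verification (spherical completeness gives a point $z$ in the intersection of any $f$-nest, and (SC3) then gives $B_z\subseteq\bigcap\cN$) is precisely that deduction. Nothing further is needed.
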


\parm
For the proof of Theorems~\ref{NFPT1}, \ref{NFPT2} and~\ref{GFPT2}, see
Section~\ref{sectproof}. In Section~\ref{sectgat}, we state two general
attractor theorems. Section~\ref{secttop} is devoted to topological
fixed point theorems. In Section~\ref{sectult}, we show how to derive
ultrametric fixed point theorems, and in Section~\ref{sectattr}, we
discuss ultrametric attractor theorems. In
Section~\ref{sectcbs} we then discuss valued fields that are complete by
stages, a notion introduced by P.~Ribenboim in \cite{[R]}. We use
Theorem~\ref{GFPT2} for a quick proof of a fixed point theorem that
works in such fields (Theorem~\ref{FPTcbs}). This theorem can be used to
show that such fields are henselian. Its application to the proof of
Hensel's Lemma provides an example for a case where one does not have in
any natural way a function that is strictly contracting on all of the
space. Note also that the particularly weak form that we have chosen for
(SC2) comes in very handy for the formulation of Theorem~\ref{FPTcbs}.

\pars
In Section~\ref{sectBFT} we discuss how to derive Banach's Fixed Point
Theorem. Our aim is not to provide a new proof of this theorem; in
contrast to our other applications, the existing proofs in this case are
much shorter. Our aim here is to show how to convert the problem from
metric to ball space and to pave the way for one of our fixed point
theorems (Theorem~\ref{OAG}) for ordered abelian groups and fields.
Associated with them are two natural ball spaces:
\sn
$\bullet$ \ the order ball space, where the balls are closed bounded
intervals, and
\n
$\bullet$ \ the ultrametric ball space, where the balls are the
ultrametric balls derived from the natural valuation.
\sn
We discuss these ball spaces and the corresponding fixed point theorems
in Section~\ref{sectna}. The flexibility of our notion of ball space is
demonstrated in the concept of hybrid ball spaces, in which we use order
balls and ultrametric balls simultaneously. One of such hybrid ball
spaces is used for a simple characterization of those ordered fields
which are power series fields with residue field $\R$
(Theorem~\ref{charpsfR}).

%
%
\section{Proof of the fixed point theorems for ball spaces}
\label{sectproof}
\noindent
{\bf Proof of Theorem~\ref{NFPT1}}:\n
The set of all nests consisting of $f$-contracting balls is partially
ordered by inclusion. There is at least one such nest since by condition
(C1), there is at least one $f$-contracting ball. Further, the union
over an ascending chain of nests consisting of $f$-contracting balls is
again such a nest (observe that the cardinality of this union is bounded
by the cardinality of the power set of $X$, so the union is a set).
Hence by Zorn's Lemma, there is a maximal nest $\cN$ consisting of
$f$-contracting balls. By condition (C3), $\bigcap\cN$ contains an
$f$-contracting ball $B$. Suppose this ball is not a singleton. But
then by condition (C2), $f(B)$ contains an $f$-contracting ball $B'$.
Since $B'\subseteq f(B)\subsetuneq B$, $\cN\cup\{B'\}$ is then a nest
that properly contains $\cN$, which contradicts the maximality. We find
that $B$ must be a singleton consisting of a fixed point.       \qed

\mn
{\bf Proof of Theorem~\ref{NFPT2}}:\n
Using conditions (CU1), (CU2), (CU3) and transfinite induction, we build
a nest $\cN$ consisting of $f$-contracting balls as follows. We set
$\cN_0:=\{X\}$. Having constructed $\cN_\nu$ for some ordinal $\nu$ with
smallest $f$-contracting ball $B_\nu\in\cN_\nu\,$, we set $B_{\nu+1}:=
f(B_\nu)$ and $\cN_{\nu+1}:=\cN_{\nu} \cup \{B_{\nu+1}\}$. If $\lambda$
is a limit ordinal and we have constructed $\cN_\nu$ for all
$\nu<\lambda$, we observe that the union over all $\cN_\nu$ is a nest
$\cN'_\lambda\,$. We set $B_\lambda:=\bigcap\cN'_\lambda$ and
$\cN_\lambda:=\cN'_\lambda \cup \{B_\lambda\}$.

If $B_\nu$ is not a singleton, then $B_{\nu+1}\subsetuneq B_\nu\,$.
Hence there must be an ordinal $\nu$ of cardinality at most that of $X$
such that $B_{\nu+1}= B_\nu\,$. But this only happens if $B_\nu$ is a
singleton consisting of a fixed point $x$. If $x\ne y\in X$, then
$y\notin B_\nu$ which means that there is some $\mu<\nu$ such that
$y\in B_\mu$, but $y\notin B_{\mu+1}=f(B_\mu)$. This shows that $y$
cannot be a fixed point of $f$. Therefore, $x$ is the unique
fixed point of $f$.                    \qed

\parm
Theorem~\ref{GFPT2} can be derived from Theorem~\ref{NFPT1}. But as it
takes essentially the same effort, we will give a proof along the lines
of the proof of Theorem~\ref{NFPT1}.
\sn
{\bf Proof of Theorem~\ref{GFPT2}}:\n
Take a function $f$ on the ball space $(X,\cB)$ which is contractive on
orbits. For every $x\in X$, the set $\{B_{f^ix}\mid i\geq 0\}$ is an
$f$-nest. The set of all $f$-nests is partially ordered by inclusion.
The union over an ascending chain of $f$-nests is again an
$f$-nest. Hence by Zorn's Lemma, there is a maximal $f$-nest $\cN$. By
the assumption of Theorem~\ref{GFPT2}, there is some $z\in \bigcap\cN$
such that $B_z\subseteq\bigcap\cN$. We wish to show that $z$ is a fixed
point of $f$. If we would have that $z\ne fz$, then by (SC2),
$B_{f^iz}\subsetuneq B_z\subseteq\bigcap\cN$ for some $i\geq 1$, and the
$f$-nest $\cN\cup\{B_{f^kz}\mid k\in\N\}$ would properly contain $\cN$.
But this would contradict the maximality of $\cN$. Hence, $z$ is a fixed
point of $f$. \qed

%
%
\section{General attractor theorems}        \label{sectgat}
Let us derive from Theorem~\ref{GFPT1} an attractor theorem which is
modeled after the ultrametric attractor theorem in \cite{[KU3]}. We
consider two ball spaces $(X,\cB)$ and $(X',\cB')$ and a function
$\varphi: X\rightarrow X'$. Take an element $z'\in X'$. If there is a
function $f:X\rightarrow X$ which is strongly contracting on orbits,
and a function
\[
X\ni x\>\mapsto\>B'_x\in\cB'
\]
such that for all $x\in X$, the following conditions hold:
\sn
{\bf (AT1)} \ $z'\in B'_x$ and $\varphi(B_x)\subseteq B'_x$, \n
{\bf (AT2)} \ if $\varphi(x)\ne z'$, then $B'_{f^i x}\subsetuneq
B'_x$ for some $i\in\N$,
\sn
then $z'$ will be called a \bfind{weak $f$-attractor for $\varphi$}. If
in addition $f$ is self-contractive, then $z'$ will be called an
\bfind{attractor for $\varphi$}.

\begin{theorem} {\bf (Attractor Theorem 1)} \label{AT1} \n
Take a function $\varphi: X\rightarrow X'$ and an attractor $z'\in X'$
for $\varphi$. If $(X,\cB)$ is spherically complete, then $z'\in\varphi
(X)$.
\end{theorem}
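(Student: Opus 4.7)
The plan is to reduce Theorem~\ref{AT1} directly to Theorem~\ref{GFPT1} and then exploit the defining property (AT2) of an attractor.

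First, since $z'$ is an attractor for $\varphi$, the associated function $f:X\to X$ is self-contractive by definition. As $(X,\cB)$ is assumed to be spherically complete, Theorem~\ref{GFPT1} applies and yields a fixed point $z\in X$ of $f$, that is, $fz=z$ and hence $f^iz=z$ for every $i\geq 0$.

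Next I would show that this fixed point $z$ is in fact a preimage of $z'$ under $\varphi$. Suppose, for contradiction, that $\varphi(z)\ne z'$. Then condition (AT2), applied at $x=z$, provides some $i$ such that $B'_{f^iz}\subsetuneq B'_z$. But $f^iz=z$ because $z$ is fixed by $f$, so $B'_{f^iz}=B'_z$, yielding the absurd relation $B'_z\subsetuneq B'_z$. Therefore $\varphi(z)=z'$, which means $z'\in\varphi(X)$, as required.

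There is no real obstacle here: the work has already been absorbed into Theorem~\ref{GFPT1} and into the definition of ``attractor''. The only thing to be careful about is to verify that (AT1) is not needed at this stage (it is used only to guarantee that $z'$ lies in each $B'_x$ so that the shrinking described in (AT2) makes geometric sense; it plays no direct role in deducing $\varphi(z)=z'$), and to check that the self-contractiveness hypothesis on $f$ genuinely feeds into the hypothesis of Theorem~\ref{GFPT1}. Both are immediate from the definitions.
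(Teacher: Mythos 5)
Your proof is correct and is essentially the paper's own argument: apply Theorem~\ref{GFPT1} (using that an attractor, by definition, comes with a self-contractive $f$ and that $(X,\cB)$ is spherically complete) to get a fixed point $z$, then use (AT2) to conclude $\varphi(z)=z'$, since $f^iz=z$ would otherwise force the impossible relation $B'_z\subsetuneq B'_z$. The paper states this in one line; you have merely made the contrapositive step explicit.
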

Indeed, by Theorem~\ref{GFPT1}, $f$ has a fixed point $z$. But by
condition (AT2), $fz=z$ implies that $\varphi(z)= z'$. The following
version of the Attractor Theorem follows in a similar way from
Theorem~\ref{GFPT2}:

\begin{theorem} {\bf (Attractor Theorem 2)} \label{AT2} \n
Take a function $\varphi: X\rightarrow X'$ and a weak $f$-attractor
$z'\in X'$ for $\varphi$. If for every $f$-nest $\cN$ in $(X,\cB)$
there is some $z\in \bigcap\cN$ such that $B_z\subseteq\bigcap\cN$, then
$z'\in\varphi (X)$.
\end{theorem}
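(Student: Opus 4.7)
The plan is to mimic the argument sketched right after Theorem~\ref{AT1}, but with Theorem~\ref{GFPT2} in place of Theorem~\ref{GFPT1}. Since $z'$ is a weak $f$-attractor, we have in hand a function $f:X\rightarrow X$ which is strongly contracting on orbits, together with the auxiliary assignment $x\mapsto B'_x$ satisfying (AT1) and (AT2). The hypothesis of Theorem~\ref{AT2} is exactly the hypothesis of Theorem~\ref{GFPT2} for $f$ on $(X,\cB)$, so Theorem~\ref{GFPT2} furnishes a fixed point $z\in X$ of $f$.

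It then remains to show $\varphi(z)=z'$. I would argue by contradiction: suppose $\varphi(z)\ne z'$. Condition (AT2), applied with $x=z$, yields some $i\geq 1$ with $B'_{f^iz}\subsetuneq B'_z$. But $fz=z$ implies $f^iz=z$ for every $i\geq 0$, so $B'_{f^iz}=B'_z$, a contradiction. Therefore $\varphi(z)=z'$, and in particular $z'\in\varphi(X)$.

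There is no serious obstacle: the only step worth noting is the compatibility check that the hypothesis on intersections of $f$-nests in the statement of Theorem~\ref{AT2} matches verbatim the hypothesis needed by Theorem~\ref{GFPT2}, which it does. The rest is just the observation that at a fixed point of $f$ the orbit collapses, so the strict inclusion demanded by (AT2) is impossible unless $\varphi(z)=z'$. This is also why only the weak form of attractor (no self-contractiveness (SC3)) is needed here, in contrast to Theorem~\ref{AT1}, where (SC3) was essential for invoking Theorem~\ref{GFPT1} via spherical completeness.
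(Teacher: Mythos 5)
Your proof is correct and follows exactly the paper's intended route: the paper derives Theorem~\ref{AT2} from Theorem~\ref{GFPT2} in the same way that Theorem~\ref{AT1} is derived from Theorem~\ref{GFPT1}, namely by producing a fixed point $z$ of $f$ and then using (AT2) to force $\varphi(z)=z'$. Your explicit contradiction argument (that $f^iz=z$ makes the strict inclusion $B'_{f^iz}\subsetuneq B'_z$ impossible) is just a spelled-out version of the paper's one-line remark.
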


%
%
\section{Fixed point theorems for topological spaces}
\label{secttop}
In this section, we consider compact topological spaces $X$ with
functions $f: X \rightarrow X$. We note:

\begin{lemma}                               \label{cHsc}
Every compact space $X$ together with any family
of nonempty closed subsets is a spherically complete ball space.
\end{lemma}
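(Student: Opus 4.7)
The plan is to reduce this statement to the standard finite-intersection-property characterization of compactness. Recall that a topological space $X$ is compact if and only if every family of closed subsets of $X$ having the finite intersection property has nonempty total intersection. So spherical completeness of $(X,\cB)$ will follow as soon as I verify that every nonempty nest of nonempty closed subsets satisfies the finite intersection property.

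First I fix an arbitrary nonempty nest $\cN$ of nonempty closed sets from the given family; by the definition of a nest, $\cN$ is totally ordered by inclusion. Next I check the finite intersection property: any finite subcollection $\{B_1,\dots,B_n\}\subseteq\cN$ is totally ordered by inclusion, so it has a smallest member $B_k$, and then $\bigcap_{i=1}^n B_i = B_k$, which is nonempty by assumption on $\cB$. Applying the FIP characterization of compactness to $\cN$ therefore gives $\bigcap\cN\neq\emptyset$, which is exactly the spherical completeness condition.

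There is essentially no obstacle here; the content of the lemma is just the observation that nests of nonempty sets automatically enjoy the finite intersection property, so the classical compactness criterion applies with no extra work. No assumption on $\cB$ beyond nonemptiness of its members and their being closed is used, which matches the "any family of nonempty closed subsets" hypothesis.
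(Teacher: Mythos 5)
Your proof is correct and follows essentially the same route as the paper, which simply cites a standard result that linearly ordered families of nonempty closed sets in a compact space have nonempty intersection. Your version is self-contained: you supply the (easy) verification that a nest has the finite intersection property and then invoke the FIP characterization of compactness, which is exactly the content of the cited proposition.
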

\begin{proof}
\cite[Proposition 2, p.\ 57]{ap} states that every ``centered system''
of nonempty closed subsets of a compact space $X$ has a nonempty
intersection. Here, a ``centered system'' means a family of subsets that
is linearly ordered under inclusion. This is exactly what we call a
``nest''. Therefore, the cited proposition proves our lemma.
\end{proof}

\pars
In view of this lemma, we will take $\cB$ to be the set of all nonempty
closed subsets of $X$. We show how to deduce Theorem~\ref{topn} from
Theorem~\ref{NFPT1}.

\pars
{\bf Proof of Theorem~\ref{topn}}:
Take a compact space $X$ and a closed function $f:X\rightarrow X$.
Assume first that every closed set $B$ in $X$ with $f(B)\subseteq B$
contains a closed $f$-contracting subset. Since $X$ is closed, this
implies that condition (C1) is satisfied. If $B$ is an $f$-contracting
closed set in $X$, then $f(B)$ is closed since $f$ is a closed function.
Also, we have that $f(B)\subseteq B$, which yields that $f(f(B))
\subseteq f(B)$. Hence by assumption, $f(B)$ contains an $f$-contracting
closed set, so condition (C2) is satisfied. The intersection $\bigcap
{\cal N}$ of a nest ${\cal N}=\{B_i\mid i\in I\}$ of $f$-contracting
closed sets $B_i$ is closed; since $f(B_i)\subseteq B_i$ for all
$i\in I$, we also have that $f(\bigcap {\cal N}) \subseteq \bigcap
{\cal N}$. Hence by assumption, $\bigcap {\cal N}$ contains an
$f$-contracting closed set. So condition (C3) is satisfied, and
Theorem~\ref{NFPT1} shows that $f$ has a fixed point.

\pars
Now assume that every closed set $B$ in $X$ with $f(B)\subseteq B$ is
$f$-contracting. Then $X$ is $f$-contracting, and for every
$f$-contracting closed set $B$ also $f(B)$ is closed with $f(f(B))
\subseteq f(B)$ and hence $f$-contracting. Further, the intersection
$\bigcap {\cal N}$ of a nest of $f$-contracting closed sets is
closed with $f(\bigcap {\cal N}) \subseteq \bigcap {\cal N}$ and hence
$f$-contracting. Therefore, Theorem~\ref{NFPT2} shows that $f$ has a
unique fixed point.                                          \qed

\pars
We will now show how to deduce a fixed point theorem of \cite{[SWJ]}
from Theorem~\ref{topn}. For this theorem, we assume that $X$ is
compact, Hausdorff and connected. An open cover $\mathcal U$ of $X$ is
said to be {\em $J$-contractive for $f$} if for every $U \in \mathcal U$
there is $U' \in \mathcal U$ such that $f(\mbox{\rm cl}\, U) \subseteq
U'$, where $\mbox{\rm cl}\, U$ denotes the closure of $U$. The function
$f: X \rightarrow X$ is called {\em $J$-contraction} if every open cover
$\mathcal U$ has a finite $J$-contractive open refinement $\mathcal V $
for $f$. For every non-connected compact Hausdorff space $X$ there is a
$J$-contraction of $X$ which has no fixed points (cf.~\cite[Proposition
3, p.\ 553]{[SWJ]}); therefore, the approach using $J$-contractions only
works for connected compact Hausdorff spaces. We cite two important
facts about a $J$-contraction $f$ on a connected compact Hausdorff space
$X$:
\begin{enumerate}
\item[(J1)] If $B$ is a closed subset of $X$ with $f(B)\subseteq B$,
then the restriction of $f$ to $B$ is also a $J$-contraction
(\cite[Proposition 1, p.\ 552]{[SWJ]});
\item[(J2)] If $f$ is onto, then $|X|$ = 1 (\cite[Proposition 4, p.\
554]{[SWJ]}).
\end{enumerate}

The following is Theorem 4 of \cite{[SWJ]}:

\begin{theorem}                             \label{SWJ}
Take a connected compact Hausdorff space $X$ and a continuous
$J$-contraction $f:X\rightarrow X$. Then $f$ has a unique fixed point.
\end{theorem}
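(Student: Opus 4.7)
The plan is to deduce Theorem~\ref{SWJ} from Theorem~\ref{topn}. Since $X$ is compact Hausdorff and $f$ is continuous, $f$ is automatically a closed map (continuous images of compact sets are compact, and compact subsets of Hausdorff spaces are closed), so the standing hypothesis of Theorem~\ref{topn} holds. I will obtain existence by verifying the hypothesis of the first part of Theorem~\ref{topn}, and handle uniqueness by a separate connectedness argument.

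For existence, I need to show that every nonempty closed $B\subseteq X$ with $f(B)\subseteq B$ contains a closed $f$-contracting subset. Applying Zorn's Lemma to the family of nonempty closed $f$-invariant subsets of $B$, ordered by reverse inclusion --- chains have nonempty intersections by compactness of $X$, and those intersections are $f$-invariant since $f(\bigcap C_i)\subseteq\bigcap f(C_i)\subseteq\bigcap C_i$ --- I obtain a minimal such subset $B^*\subseteq B$. Because $f$ is closed, $f(B^*)$ is a nonempty closed $f$-invariant subset of $B^*$, so $f(B^*)=B^*$ by minimality.

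The crucial claim is that $B^*$ is connected. Granting this, (J1) makes $f|_{B^*}$ a $J$-contraction on the connected compact Hausdorff space $B^*$, and (J2) combined with the surjectivity $f(B^*)=B^*$ forces $|B^*|=1$; hence $B^*$ is a singleton fixed point, which is by definition an $f$-contracting closed subset of $B$. To prove connectedness, suppose $B^*=D_1\sqcup D_2$ with $D_1,D_2$ disjoint nonempty closed. Setting $E_i:=B^*\cap f^{-1}(D_i)$ gives a partition $B^*=E_1\sqcup E_2$ with $f(E_i)=D_i$; minimality of $B^*$ forbids $E_i=D_i$ (which would make $D_i$ a proper closed $f$-invariant subset), so $f$ must mix the two pieces. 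Using normality of $X$, choose disjoint open $U_i\supseteq D_i$ with $\mathrm{cl}\,U_1\cap\mathrm{cl}\,U_2=\emptyset$, and apply the $J$-contraction property to the open cover $\{U_1,U_2,X\setminus B^*\}$ of $X$: a careful examination of how a finite $J$-contractive refinement interacts with the partition $B^*=D_1\sqcup D_2$, given the forced mixing of $f$, yields a proper closed $f$-invariant subset of $B^*$, contradicting minimality.

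Uniqueness is handled analogously. If $x\ne y$ were distinct fixed points, take (by normality) disjoint open $V_x\ni x$, $V_y\ni y$ with $\mathrm{cl}\,V_x\cap\mathrm{cl}\,V_y=\emptyset$, and consider the open cover $\{V_x,V_y,X\setminus\{x,y\}\}$ of $X$. Since $f(x)=x$ and $f(y)=y$, any refinement element containing $x$ (resp.\ $y$) must be mapped by $f$ into another element containing $x$ (resp.\ $y$); partitioning the refinement according to which of $V_x$, $V_y$, or $X\setminus\{x,y\}$ each element lies in, and tracking the $J$-contractive map, produces two disjoint nonempty open sets whose union is $X$, contradicting connectedness. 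The main obstacle throughout is the connectedness of $B^*$: this is where the full $J$-contraction refinement mechanism, in combination with normality of $X$, is essentially used, going beyond the abstract consequences (J1) and (J2).
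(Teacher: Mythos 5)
Your reduction to Theorem~\ref{topn} starts correctly ($f$ is closed because it is a continuous map on a compact Hausdorff space), and the Zorn's Lemma construction of a minimal closed $f$-invariant $B^*\subseteq B$ with $f(B^*)=B^*$ is sound. But the two steps that carry all the weight are not proved: both the connectedness of $B^*$ and the uniqueness of the fixed point are dispatched with ``a careful examination of how a finite $J$-contractive refinement interacts with the partition \dots\ yields \dots''. That is a promissory note, not an argument. In the connectedness step in particular, after you set up $B^*=D_1\sqcup D_2$ and $E_i=B^*\cap f^{-1}(D_i)$ and observe that $f$ must mix the two pieces, it is not at all clear how a finite $J$-contractive refinement of $\{U_1,U_2,X\setminus B^*\}$ produces a proper closed $f$-invariant subset of $B^*$; one would have to actually construct such a subset, and nothing in the sketch does so. The uniqueness paragraph has the same character: it is in effect an attempt to re-prove from scratch a statement of roughly the strength of Proposition~4 of \cite{[SWJ]}, again without the actual combinatorial argument. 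As written, the proposal reduces the theorem to two unproven claims, and those claims are precisely the non-trivial content.

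The intended route is much shorter and avoids all of this. By (J1), for \emph{any} closed $B$ with $f(B)\subseteq B$ the restriction $f|_B$ is a $J$-contraction, and (J2) then says that either $f|_B$ is not onto --- i.e.\ $f(B)\subsetuneq B$ --- or $B$ is a singleton $\{x\}$, in which case $fx\in f(B)\subseteq B$ forces $fx=x$. In either case $B$ itself is $f$-contracting. This verifies the hypothesis of the \emph{second} part of Theorem~\ref{topn}, which delivers existence and uniqueness in one stroke: no Zorn's Lemma (that work is already done inside Theorems~\ref{NFPT1} and~\ref{NFPT2}), no minimal invariant set, and no separate uniqueness argument are needed. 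Your implicit concern that (J2) is stated for connected spaces, so that applying it to a possibly disconnected invariant subset requires justification, is a reasonable one to raise --- but if you take it seriously you must genuinely prove the connectedness of $B^*$, which your sketch does not do; and if you accept (J2) for restrictions to arbitrary closed invariant sets, as the paper's proof does, then the entire detour through minimality and connectedness is unnecessary.
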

\begin{proof}
We claim that by (J1) and (J2), every closed subset $B$ of $X$ with
$f(B) \subseteq B$ is $f$-contracting. Indeed, since by (J1), $f|_B$
is a $J$-contraction on $B$, (J2) shows that either $f|_B$ is not onto,
or $B$ is a singleton $\{x\}$ and since $fx\in f(B)\subseteq B$, we have
that $fx=x$. Now Theorem~\ref{SWJ} follows from Theorem~\ref{topn}.
\end{proof}

\parm
The next theorem shows how to apply Theorem~\ref{GFPT2} to toplogical
spaces.


\begin{theorem}                             \label{top3}
Take a compact space $X$ and a closed function $f:X\rightarrow X$.
Assume that for every $x\in X$ with $fx\ne x$ there is a closed subset
$B$ of $X$ such that $x\in B$ and $x\notin f(B)\subseteq B$. Then $f$
has a fixed point in $B$. Moreover, $f$ is self-contractive, and for
every $x\in X$ with $fx\ne x$ there is a smallest closed subset $B$ of
$X$ such that $x\in B$ and $x\notin f(B)\subseteq B$.
\end{theorem}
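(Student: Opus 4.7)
The plan is to reduce to Theorem~\ref{GFPT1}. Take $\cB$ to be the family of all nonempty closed subsets of $X$; by Lemma~\ref{cHsc} the ball space $(X,\cB)$ is spherically complete, so it suffices to exhibit an assignment $x\mapsto B_x\in\cB$ making $f$ self-contractive. My candidate is the smallest closed $f$-invariant subset containing $x$, namely
\[
B_x\;:=\;\bigcap\{B\subseteq X\mid B\text{ closed},\ x\in B,\ f(B)\subseteq B\}.
\]
This lies in the defining family because $X$ itself is closed and $f$-invariant, and because both closedness and the property $f(\,\cdot\,)\subseteq\,\cdot\,$ are preserved under arbitrary intersections.

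I would verify self-contractivity in three steps. (SC1) is immediate. For (SC2), the inclusion $B_{fx}\subseteq B_x$ follows from minimality, since $fx\in f(B_x)\subseteq B_x$ and $B_x$ is closed and $f$-invariant. For the strict version when $fx\ne x$, I would invoke the hypothesis: pick a closed $B'\subseteq X$ with $x\in B'$ and $x\notin f(B')\subseteq B'$; then $f(B')$ is closed (since $f$ is a closed map), $f$-invariant, and contains $fx$, so by minimality $B_{fx}\subseteq f(B')$, which excludes $x$, while $x\in B_x$, forcing $B_{fx}\subsetuneq B_x$. For (SC3), if $\cN=\{B_x\mid x\in S\}$ is an $f$-nest and $z\in\bigcap\cN$, then every member of $\cN$ is a closed $f$-invariant set containing $z$, so $B_z\subseteq B_x$ for each $x\in S$ and hence $B_z\subseteq\bigcap\cN$. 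Theorem~\ref{GFPT1} then delivers a fixed point of $f$.

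For the smallest-$B$ claim, fix $x$ with $fx\ne x$ and let $\cal F_x$ be the family of closed $B\subseteq X$ with $x\in B$, $x\notin f(B)$, and $f(B)\subseteq B$; by hypothesis $\cal F_x\ne\emptyset$. I would then verify that $\bigcap\cal F_x$ itself lies in $\cal F_x$: closedness, containment of $x$, and $f(\bigcap\cal F_x)\subseteq\bigcap_{B\in\cal F_x}f(B)\subseteq\bigcap\cal F_x$ are routine, and the decisive observation is that $x\notin f(B)$ for every $B\in\cal F_x$ yields $x\notin f(\bigcap\cal F_x)$.

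The main obstacle I anticipate is the strict inclusion in (SC2): the minimal closed $f$-invariant set $B_x$ need not exclude $x$ from its own image, so one must leverage the hypothesis to manufacture an auxiliary closed $f$-invariant set $f(B')$ that \emph{does} exclude $x$, and then use minimality of $B_{fx}$ to sandwich it strictly below $B_x$.
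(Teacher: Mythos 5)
Your proof is correct and follows essentially the same route as the paper: the same minimal closed $f$-invariant set $B_x$, the same use of the closedness of $f$ to produce the closed, $f$-invariant set $f(B')$ that forces $B_{fx}\subsetuneq B_x$, and the same minimality argument for (SC3). The only cosmetic differences are that you conclude via Theorem~\ref{GFPT1} instead of Lemma~\ref{cHsc} combined with Theorem~\ref{GFPT2}, and that for the final claim you show $\bigcap\mathcal{F}_x\in\mathcal{F}_x$ directly rather than observing that this smallest set is $B_x$ itself.
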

\begin{proof}
For every $x\in X$ we consider the following family of
balls:
\[
\mathfrak B_x := \{B \mid B \text{ closed subset of $X$, }
x\in B \text{ and } f(B)\subseteq B\}.
\]
Note that $\mathfrak B_x$ is nonempty because it contains $X$.
We define
\[
B_x := \bigcap\mathfrak B_x\;.
\]
We see that $x \in B_x$ and that $f(B_x)\subseteq B_x$. Further, $B_x$
is closed, being the intersection of closed sets. This shows that $B_x$
is the smallest member of $\mathfrak B_x\,$.

For every $B\in \mathfrak B_x$ we have that $fx\in B$ and therefore,
$B\in \mathfrak B_{fx}\,$. Hence we find that $B_{fx} \subseteq B_x$.

Assume that $fx\ne x$. Then by hypothesis, there is a closed set $B$ in
$X$ such that $x\in B$ and $x\notin f(B)\subseteq B$. Since $f$ is a
closed function, $f(B)$ is closed. Moreover, $f(f(B)) \subseteq f(B)$
and $fx \in f(B)$, so $f(B) \in \mathfrak B_{fx}$. Since $x\notin f(B)$,
we conclude that $x\notin B_{fx}$, whence $B_{fx} \subsetuneq B_x\,$. We
have now proved that $f$ is strongly contracting on orbits. Further,
$B\in \mathfrak B_x\,$, whence $B_x\subset B$, $f(B_x)\subset f(B)$ and
therefore, $x\notin f(B_x)$. This shows that $B_x$ is the smallest of
all closed sets $B$ in $X$ for which $x\in B$ and $x\notin f(B)\subseteq
B$.

Take an $f$-nest $\cN$. Lemma~\ref{cHsc} shows that $\bigcap \cN$ is
nonempty. Take any $z \in \bigcap \cN$. Choose an arbitrary $B_x \in
\cN$. Then $z \in B_x$ and thus, $B_x\in \mathfrak B_z\,$. So we have
that $B_z \subseteq B_x\,$. Therefore, $B_z \subseteq\bigcap
\cN$. We have proved that $f$ is self-contractive.

Theorem~\ref{top3} now follows from Theorem~\ref{GFPT2}.
\end{proof}

%
%
\section{Ultrametric fixed point theorems}  \label{sectult}
Let $(X,d)$ be an ultrametric space. That is, $d$ is a function from
$X\times X$ to a partially ordered set $\Gamma$ with smallest element
$0$, satisfying that for all $x,y,z\in X$ and all $\gamma\in\Gamma$,
\sn
{\bf (U1)} \ $d(x,y)=0$ \ if and only if \ $x=y$,\n
{\bf (U2)} \ if $d(x,y)\leq\gamma$ and $d(y,z)\leq\gamma$, then
$d(x,z)\leq\gamma$,\n
{\bf (U3)} \ $d(x,y)=d(y,x)$ \ \ \ (symmetry).
\sn
(U2) is the ultrametric triangle law; if $\Gamma$ is totally ordered, it
can be replaced by
\sn
{\bf (UT)} \ $d(x,z)\leq\max\{d(x,y),d(y,z)\}$.
\sn
We obtain the \bfind{ultrametric  ball space} $(X,\cB_u)$ from $(X,d)$
by taking $\cB_u$ to be the set of all
\[
B(x,y)\>:=\>\{z\in X\mid d(x,z)\leq d(x,y)\}\;.
\]

It follows from the ultrametric triangle law that $B(x,y)=B(y,x)$ and
that
\begin{equation}                            \label{umball}
B(t,z)\subseteq B(x,y) \ \ \mbox{ if and only if } \ \
t\in B(x,y) \mbox{ and } d(t,z)\leq d(x,y)\;.
\end{equation}
In particular,
\[
B(t,z)\>\subseteq\>B(x,y)\quad\mbox{ if }\; t,z\in B(x,y)\;.
\]

Two elements $\gamma$ and $\delta$ of $\Gamma$ are \bfind{comparable} if
$\gamma\leq\delta$ or $\gamma\geq\delta$. Hence if $d(x,y)$ and $d(y,z)$
are comparable, then $B(x,y)\subseteq B(y,z)$ or $B(y,z) \subseteq
B(x,y)$. If $d(y,z)<d(x,y)$, then in addition, $x\notin B(y,z)$ and
thus, $B(y,z) \subsetuneq B(x,y)$. We note:
\begin{equation}                            \label{cb}
d(y,z)\,<\,d(x,y)\>\Longrightarrow\>B(y,z)\,\subsetuneq\, B(x,y)\;.
\end{equation}

If $\Gamma$ is totally ordered and $B_1$ and $B_2$
are any two balls with nonempty intersection, then $B_1\subseteq B_2$ or
$B_2\subseteq B_1\,$.

The ultrametric space $(X,d)$ is called \bfind{spherically complete}
if the corresponding ball space is spherically complete.
%
%
The following theorem (with $i=1$ in (\ref{scoo})) appeared in
\cite{[PR2]}:

\begin{theorem} {\bf (Strong Ultrametric Fixed Point Theorem)}
\label{SUFPT} \n
Take a spherically complete ultrametric space $(X,d)$ and a function
$f:X\rightarrow X$. Assume that $f$ satisfies,  for all $x,z\in X$:
\begin{eqnarray}
&& x\ne fx \>\Longrightarrow\>\exists i\geq 1:\>
d(f^ix,f^{i+1}x)<d(x,fx)\>,           \label{scoo}       \\
&& d(z,fx)\leq d(fx,f^2x) \>\Longrightarrow\>
d(z,fz)\leq d(x,fx)\>.                            \label{SUFPTc}
\end{eqnarray}
%
%
Then $f$ has a fixed point.
\end{theorem}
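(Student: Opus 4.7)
The plan is to deduce the theorem from Theorem~\ref{GFPT1} by exhibiting $f$ as a self-contractive function on the ultrametric ball space $(X,\cB_u)$, which is spherically complete by hypothesis. The natural assignment is
\[
B_x\>:=\>B(x,fx)\quad\text{for every }x\in X;
\]
when $fx=x$ this coincides with the singleton $\{x\}=B(x,x)$, which is an element of $\cB_u$.

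Conditions (SC1) and (SC2) would be handled first. (SC1) is immediate from $d(x,x)=0$. For the inclusion $B_{fx}\subseteq B_x$ in (SC2), I would apply hypothesis (\ref{SUFPTc}) with $z=fx$: the trivial inequality $0=d(fx,fx)\leq d(fx,f^2x)$ yields $d(fx,f^2x)\leq d(x,fx)$, and then (\ref{umball}) gives $B(fx,f^2x)\subseteq B(x,fx)$. Iterating this shows $f^jx\in B_x$ for every $j\geq 0$. To obtain the strict inclusion, choose $i\geq 1$ with $d(f^ix,f^{i+1}x)<d(x,fx)$ as in (\ref{scoo}); (\ref{umball}) then produces $B_{f^ix}\subseteq B_x$, and if equality held, then both $x$ and $fx$ would lie in $B(f^ix,f^{i+1}x)$, so (U2) together with symmetry would force $d(x,fx)\leq d(f^ix,f^{i+1}x)$, contradicting the strict inequality.

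The main obstacle is verifying (SC3), and this is where hypothesis (\ref{SUFPTc}) is used in its full strength. Take an $f$-nest $\cN=\{B_x\mid x\in S\}$ with $S\subseteq X$ closed under $f$, and pick any $z\in\bigcap\cN$. For each $x\in S$, closure of $S$ under $f$ gives $fx\in S$, so $B_{fx}=B(fx,f^2x)\in\cN$ and therefore $d(z,fx)\leq d(fx,f^2x)$. Hypothesis (\ref{SUFPTc}) now yields $d(z,fz)\leq d(x,fx)$; combined with $z\in B(x,fx)$, a further application of (\ref{umball}) gives $B_z=B(z,fz)\subseteq B(x,fx)=B_x$. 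Since this holds for every $x\in S$, we conclude $B_z\subseteq\bigcap\cN$, which is (SC3).

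With $f$ thus shown to be self-contractive on the spherically complete ball space $(X,\cB_u)$, Theorem~\ref{GFPT1} immediately delivers the desired fixed point.
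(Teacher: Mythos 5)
Your proposal is correct and follows essentially the same route as the paper's own proof: the same choice $B_x=B(x,fx)$, the same use of (\ref{SUFPTc}) with $z=fx$ to get $B_{fx}\subseteq B_x$, the same use of (\ref{scoo}) for the strict inclusion, and the same verification of (SC3) before invoking Theorem~\ref{GFPT1}. Your justification of the strictness in (SC2) via (U2) is, if anything, slightly more careful than the paper's direct appeal to (\ref{cb}).
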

\begin{proof}
Our theorem follows from Theorem~\ref{GFPT1} once we have shown that $f$
is self-contractive on the ball space $(X,\cB_u)$. We define
\begin{equation}                            \label{uBx}
B_x\>:=\>B(x,fx)
\end{equation}
and observe that $x\in B_x\,$. Taking $z=fx$ in (\ref{SUFPTc}), we find
that $d(fx,f^2x)\leq d(x,fx)$. Hence by (\ref{umball}), $B_{fx}=
B(fx,f^2x) \subseteq B(x,fx)=B_x$. By induction on $i$ it follows that
$f^i x\in B_x\,$. By (\ref{scoo}), $d(f^ix,f^{i+1}x)<d(x,fx)$ for some
$i\geq 1$. Then by (\ref{cb}), we have that $B_{f^ix}= B(f^i x,
f^{i+1}x) \subsetuneq B(x,fx)=B_x$. So we have proved that $f$ satisfies
(SC1) and (SC2).

To show that also (SC3) holds, we take an $f$-nest $\cN$ and any $z\in
\bigcap\cN$. We have to show that $B_z \subseteq\bigcap\cN$, that is,
$B_z \subseteq B_x$ for all $B_x\in\cN$. Since $z\in \bigcap\cN
\subseteq B_{fx}=B(fx,f^2x)$, we have that $d(z,fx)\leq d(fx,f^2x)$. By
(\ref{SUFPTc}), this implies that $d(z,fz)\leq d(x,fx)$. Since we know
that $z\in B_x\,$, (\ref{umball}) now shows that $B_z =B(z,fz)\subseteq
B(x,fx)=B_x\,$.
\end{proof}

A function $f:X\rightarrow X$ is called \bfind{contracting} if $d(fx,fy)
\leq d(x,y)$ for all $x,y\in X$. It is shown in \cite{[PR2]} that the
following theorem (in the case of $i=1$ in (\ref{scoo})) follows from
Theorem~\ref{SUFPT}:

\begin{theorem}  {\bf (Ultrametric Fixed Point Theorem)} \label{UFPT}\n
Every contracting function on a spherically complete ultrametric space
which satisfies (\ref{scoo}) has a fixed point.
\end{theorem}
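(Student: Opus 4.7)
The plan is to derive Theorem~\ref{UFPT} from Theorem~\ref{SUFPT}. Both results assume (\ref{scoo}) together with spherical completeness, so the entire task is to verify that the contraction hypothesis $d(fx,fy)\leq d(x,y)$ already forces the second condition (\ref{SUFPTc}) of the Strong Ultrametric Fixed Point Theorem to hold; once this is in hand, Theorem~\ref{SUFPT} delivers the fixed point.

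To check (\ref{SUFPTc}), fix $x,z\in X$ with $d(z,fx)\leq d(fx,f^2 x)$. By contraction, $d(fx,f^2 x)\leq d(x,fx)$; writing $\gamma:=d(x,fx)$, this already gives $d(z,fx)\leq\gamma$. Applying $f$ once more and using contraction again yields $d(fz,f^2 x)\leq d(z,fx)\leq\gamma$, while $d(fx,f^2x)\leq\gamma$ and $d(x,fx)\leq\gamma$ hold trivially. Thus every distance so far bounded among the points $z,x,fx,f^2x,fz$ is at most the single reference value $\gamma$.

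What remains is to propagate these bounds to the single inequality $d(z,fz)\leq\gamma$. For this the ultrametric triangle law (U2) is exactly the right tool: a short chain of applications, each pairing two known bounds against $\gamma$, routes $z$ to $x$ through $fx$ and $fz$ to $x$ through $f^2x$ and then $fx$, and finally stitches these together through $x$ to produce $d(z,fz)\leq\gamma=d(x,fx)$. This is exactly condition (\ref{SUFPTc}), and the proof is complete by Theorem~\ref{SUFPT}.

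The only subtlety I anticipate is that $\Gamma$ is allowed to be merely partially ordered. However, because every invocation of (U2) in the chain compares two distances to the same fixed element $\gamma$, no incompatibility between incomparable elements of $\Gamma$ can ever arise; this is precisely why the weak form (U2), rather than (UT), is sufficient, and the reduction goes through verbatim regardless of whether the value set $\Gamma$ is totally ordered and regardless of which $i\geq 1$ witnesses (\ref{scoo}).
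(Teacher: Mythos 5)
Your reduction is correct. Fixing $\gamma:=d(x,fx)$, contraction gives $d(fx,f^2x)\leq\gamma$ and hence $d(z,fx)\leq\gamma$ from the hypothesis of (\ref{SUFPTc}); a second application of contraction gives $d(fz,f^2x)\leq d(z,fx)\leq\gamma$, and then the three invocations of (U2) you describe, each comparing two distances against the same $\gamma$, yield $d(z,fz)\leq\gamma$. Since (U2) is only ever used with this one fixed bound, the argument is indeed insensitive to $\Gamma$ being merely partially ordered, and it works for any $i$ witnessing (\ref{scoo}). This is, however, not the route the paper takes: the paper explicitly attributes the derivation of Theorem~\ref{UFPT} from Theorem~\ref{SUFPT} to \cite{[PR2]} (there only for $i=1$) and instead deduces Theorem~\ref{UFPT} directly from Theorem~\ref{GFPT1} via Lemma~\ref{csco}, which shows that for contracting $f$ and $B_x=B(x,fx)$ one has $B_z\subseteq B_x$ for every $z\in B_x$, whence $f$ is self-contractive once (\ref{scoo}) is added. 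The two arguments share the same ultrametric kernel---deducing $d(z,fz)\leq d(x,fx)$ from the fact that $z$ is close enough to $x$ (paper) or to $fx$ (you)---but the decompositions differ. Your route isolates the clean statement that contraction alone implies (\ref{SUFPTc}) and recovers the Prie{\ss}-Crampe--Ribenboim derivation in full generality; the paper's route via Lemma~\ref{csco} is preferred there because the lemma's additional conclusions ($f(B_x)\subseteq B_x$ and (SC3) without assuming (\ref{scoo})) are reused later, for instance in the proof of Theorem~\ref{FPTcbs}.
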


This theorem follows directly from Theorem~\ref{GFPT1} by way of the
following result:

\begin{lemma}                               \label{csco}
Take a contracting function $f$ on an ultrametric space $(X,d)$ and
define the balls $B_x$ as in (\ref{uBx}). Then $f$ satisfies (SC3), and
$f(B_x)\subseteq B_x$ and $B_{fx}\subseteq B_x$ for all $x\in X$. If
$f$ also satisfies (\ref{scoo}), then it is self-contractive.
\end{lemma}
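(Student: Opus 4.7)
The plan is to verify the four claims in turn, each a short calculation with the ultrametric triangle law (U2), the ``same-ball'' criterion~(\ref{umball}), and the contracting hypothesis $d(fx,fy)\le d(x,y)$.

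First I would show $f(B_x)\subseteq B_x$: any $y\in B_x=B(x,fx)$ satisfies $d(x,y)\le d(x,fx)$, whence $d(fx,fy)\le d(x,y)\le d(x,fx)$ by contracting, which puts $fy$ in $B(x,fx)=B_x$. Specializing $y=fx$ gives $d(fx,f^2x)\le d(x,fx)$; combined with $fx\in B_x$, condition~(\ref{umball}) delivers $B_{fx}=B(fx,f^2x)\subseteq B_x$. By induction on $i$ this also yields $f^ix\in B_x$ for all $i\ge 0$, which will be needed later.

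Next I would verify (SC3). Let $\cN$ be an $f$-nest and $z\in\bigcap\cN$. For each $B_x\in\cN$ I have $z\in B_x$, i.e.\ $d(x,z)\le d(x,fx)$, and so $d(fx,fz)\le d(x,z)\le d(x,fx)$ by contracting. Applying (U2) with $\gamma=d(x,fx)$ twice, first to $d(x,fx)\le\gamma$ and $d(fx,fz)\le\gamma$ to obtain $d(x,fz)\le\gamma$, then to $d(z,x)\le\gamma$ and $d(x,fz)\le\gamma$, yields $d(z,fz)\le d(x,fx)$. With $z\in B_x$, (\ref{umball}) then gives $B_z=B(z,fz)\subseteq B_x$. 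Since $B_x\in\cN$ was arbitrary, $B_z\subseteq\bigcap\cN$.

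Finally, assume (\ref{scoo}). Condition (SC1), $x\in B(x,fx)$, is immediate, and the first half of (SC2), $B_{fx}\subseteq B_x$, was established above. The main obstacle is the strict part of (SC2): for $x\ne fx$ produce $i\ge 1$ with $B_{f^ix}\subsetuneq B_x$. By (\ref{scoo}) pick $i\ge 1$ with $d(f^ix,f^{i+1}x)<d(x,fx)$; since $f^ix\in B_x$ and the diameter of $B_{f^ix}$ is $\le d(x,fx)$, (\ref{umball}) gives $B_{f^ix}\subseteq B_x$. To show properness I would use $x$ as a witness: if instead $x\in B_{f^ix}$, then with $\gamma:=d(f^ix,f^{i+1}x)$ one has $d(x,f^ix)\le\gamma$; (U2) gives $d(x,f^{i+1}x)\le\gamma$, contracting gives $d(fx,f^{i+1}x)=d(f(x),f(f^ix))\le d(x,f^ix)\le\gamma$, and a final application of (U2) forces $d(x,fx)\le\gamma<d(x,fx)$, a contradiction. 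Hence $x\in B_x\setminus B_{f^ix}$, so $B_{f^ix}\subsetuneq B_x$, completing (SC2). Together with (SC3) from the previous paragraph, $f$ is self-contractive.
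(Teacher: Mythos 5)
Your proof is correct and follows essentially the same route as the paper's: the same (U2)/(\ref{umball}) computations give $f(B_x)\subseteq B_x$, $B_{fx}\subseteq B_x$ and (SC3). The only differences are cosmetic: in the first paragraph you elide one application of (U2) (from $d(fx,fy)\le d(x,fx)$ and $d(x,fx)\le d(x,fx)$ one still needs (U2) to conclude $d(x,fy)\le d(x,fx)$, i.e.\ $fy\in B(x,fx)$ --- exactly the step you do carry out in the (SC3) paragraph), and you supply an explicit contraction-based witness argument for the properness $B_{f^ix}\subsetuneq B_x$, a point the paper dismisses as ``clear'' by appeal to (\ref{cb}).
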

\begin{proof}
We claim that for contracting $f$ we have, as in the topological case,
\begin{equation}                            \label{zinBx}
B_z\>\subseteq\> B_x \quad\mbox{for all } z\in B_x\:.
\end{equation}
Indeed, $z\in B_x$ means that $d(x,z)\leq d(x,fx)$. Since $f$ is
contracting, we then have that $d(fx,fz)\leq d(x,z)\leq d(x,fx)$.
Together with the trivial inequality $d(x,fx)\leq d(x,fx)$ and the
ultrametric triangle law, this yields that $d(x,fz)\leq d(x,fx)$.
Together with $d(z,x)=d(x,z)\leq d(x,fx)$ and the ultrametric triangle
law, this yields that $d(z,fz)\leq d(x,fx)$. Now (\ref{umball}) shows
that $B_z= B(z,fz)\subseteq B(x,fx)=B_x\,$, which proves (\ref{zinBx}).
We also obtain that $fz\in B_x\,$, and as $z\in B_x$ was arbitrary, this
shows that $f(B_x)\subseteq B_x\,$.

Taking $z=fx\in B_x$ in (\ref{zinBx}), we find that $B_{fx}\subseteq
B_x\,$.

If $\cN$ is an $f$-nest and $z\in\bigcap\cN$, then for every $B_x\in
\cN$ we have that $z\in B_x$ and by (\ref{zinBx}), $B_z\subseteq B_x\,$.
This implies that $B_z\subseteq\bigcap\cN$, which proves (SC3).

The last assertion of the lemma is clear.
%
\end{proof}

%

%
%
\section{Ultrametric attractor theorems}    \label{sectattr}
In this section, we present a generalization of the attractor theorem of
\cite{[KU3]} to ultrametric spaces with partially ordered value sets,
and show how to derive it from Theorem~\ref{AT1}.

Take ultrametric spaces $(X,d)$ and $(X',d')$ and a function
$\varphi:\;X \rightarrow X'$. An element $z'\in X'$ is called
\bfind{attractor for $\varphi$} if for every $x\in X$ such that $z'\ne
\varphi x$, there is an element $y\in X$ which satisfies:
\sn
{\bf (UAT1)} \ $d'(\varphi y,z')<d'(\varphi x,z')$,\n
{\bf (UAT2)} \ $\varphi(B(x,y))\subseteq B(\varphi x,z')$,\n
{\bf (UAT3)} \ if $t\in X$ such that $d'(\varphi x,z')<d'(\varphi t,z')$
and $\varphi(B(t,x))\subseteq B(\varphi t,z')$, then $d(t,x)$ and
$d(x,y)$ are comparable.
\sn
Condition (UAT1) says that the approximation $\varphi x$ of $z'$ from
within the image of $\varphi$ can be improved, and condition (UAT2) says
that this can be done in a somewhat continuous way. Condition (UAT3) is
always satisfied when the value set of $(X,d)$ is totally ordered,
which implies that any two balls with nonempty intersection are
comparable by inclusion. For this reason, it does not appear as a
condition in the attractor theorem of \cite{[KU3]}. But if
the value set of $(X,d)$ is not totally ordered, then it can happen that
several ``parallel universes'' exist around a point; (UAT3) then
guarantees that we can keep our approximations to remain in the same
universe.

\begin{theorem}                             \label{MTattr}
Assume that $z'\in X'$ is an attractor for $\varphi:\;X \rightarrow X'$
and that $(X,d)$ is spherically complete. Then $z'\in \varphi(X)$.
\end{theorem}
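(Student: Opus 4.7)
The plan is to reduce Theorem~\ref{MTattr} to Theorem~\ref{AT1}. We may assume $\varphi x\ne z'$ for all $x\in X$, else there is nothing to prove. For each such $x$ the attractor hypothesis supplies some $y(x)\in X$ satisfying (UAT1), (UAT2), (UAT3); choose one and set $fx:=y(x)$, $B_x := B(x,fx)\in\cB_u$, and $B'_x := B(\varphi x, z')$. The conditions (SC1), $z'\in B'_x$ and $\varphi(B_x)\subseteq B'_x$ (the first half of (AT1)) hold by construction and by (UAT2); (UAT1) combined with (\ref{cb}) yields $B'_{fx}\subsetuneq B'_x$, which is (AT2) with $i=1$.

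The main work is to establish (SC2) and the self-contractive property (SC3). For (SC2), I invoke (UAT3) \emph{at the point $fx$}, with $t:=x$: the prerequisites $d'(\varphi fx, z')<d'(\varphi x, z')$ and $\varphi(B(x,fx))\subseteq B(\varphi x, z')$ are exactly (UAT1) and (UAT2) at $x$, so (UAT3) delivers that $d(x,fx)$ and $d(fx, f^2 x)$ are comparable. In the unwanted case $d(fx, f^2 x)\ge d(x, fx)$ the point $x$ would lie in $B(fx, f^2 x)$, and then (UAT2) at $fx$ together with (U2) would give $d'(\varphi x, z')\le d'(\varphi fx, z')$, contradicting (UAT1). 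Hence $d(fx, f^2 x) < d(x, fx)$ and $B_{fx}\subsetuneq B_x$.

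For (SC3), take any $f$-nest $\cN = \{B_x : x\in S\}$ and any $z\in\bigcap\cN$, and fix $B_x\in\cN$. The key observation is that since $S$ is closed under $f$, the ball $B_{fx}$ also belongs to $\cN$, so $z\in B_{fx}$; applying $\varphi$ via (UAT2) at $fx$ and then (U2) produces the \emph{strict} inequality
\[
d'(\varphi z, z') \,\le\, d'(\varphi fx, z') \,<\, d'(\varphi x, z').
\]
This is exactly the first hypothesis of (UAT3) at $z$ with $t:=x$; the compatibility hypothesis $\varphi(B(x,z))\subseteq B(\varphi x, z')$ holds because $B(x,z)\subseteq B_x$ by (\ref{umball}) and (UAT2) at $x$. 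The same contradiction scheme as for (SC2) rules out $d(z, fz)\ge d(x, z)$, so $d(z, fz) < d(x, z) \le d(x, fx)$ and $B_z\subseteq B_x$ by (\ref{umball}). Since $B_x\in\cN$ was arbitrary, $B_z\subseteq\bigcap\cN$.

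With $f$ self-contractive, $z'$ an attractor for $\varphi$ in the sense of Section~\ref{sectgat}, and $(X,\cB_u)$ spherically complete, Theorem~\ref{AT1} yields $z'\in\varphi(X)$. The main obstacle to anticipate is recognizing that within an $f$-nest the partial order on the value set never obstructs us: closure of $S$ under $f$ forces $d'(\varphi z, z')$ to lie strictly below every $d'(\varphi x, z')$ with $B_x\in\cN$, so (UAT3) remains applicable and the partially ordered setting costs nothing compared to the totally ordered case of \cite{[KU3]}.
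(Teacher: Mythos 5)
Your proposal is correct and follows essentially the same route as the paper's proof: construct $f$ by choosing the witnesses $y$, set $B_x=B(x,fx)$ and $B'_x=B(\varphi x,z')$, verify (AT1)--(AT2) and (SC1)--(SC3) by applying (UAT3) with $(t,x,y)$ replaced by $(x,fx,f^2x)$ and by $(x,z,fz)$ respectively, and invoke Theorem~\ref{AT1}. The only cosmetic differences are your blanket reduction to the case $\varphi x\ne z'$ for all $x$ (the paper instead sets $fx=x$ there) and your phrasing of the strict-inclusion arguments at the level of distances rather than at the level of balls.
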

\begin{proof}
For $x\in X$, we define $B'_x:=B'(\varphi x,z')$. Then we define a
function $f:\;X \rightarrow X$ as follows. If $\varphi x=z'$, we set
$fx=x$. If $\varphi x\ne z'$, then we choose some $y\in X$ that
satisfies (UAT1), (UAT2), (UAT3) and set $fx:=y$. In both cases,
we set $B_x:=B(x,fx)$.

We have that $z'\in B'_x$ by definition. If $\varphi x=z'$, then $B_x=
\{x\}$ and $\varphi(B_x)=\varphi(\{x\})=\{\varphi x\}=B'_x\,$. If
$\varphi x\ne z'$, then $\varphi(B_x)=\varphi(B(x,fx))\subseteq
B(\varphi x,z') =B'_x$ holds by (UAT2). Hence, (AT1) is satisfied.

In order to prove that (AT2) is satisfied, we assume that $\varphi x\ne
z'$. By (UAT1), we have that $d'(\varphi fx,z') <d'(\varphi x,z')$,
which by (\ref{cb}) implies that $B'_{fx}=B(\varphi fx,z')\subsetuneq
B(\varphi x,z') =B'_x\,$. Thus, (AT2) holds. We also see that $\varphi
x\notin B(\varphi fx,z')$.

Now we show that $f$ is strongly contracting on orbits. (SC1) holds by
definition, and (SC2) holds trivially when $x=fx$. So we assume that
$x\ne fx$. We run the above construction again with $fx$ in place of $x$
to obtain $f^2 x$. If $\varphi fx=z'$, then $f^2 x=fx$ and $B_{fx}=
\{fx\} \subsetuneq B_x$ since $x\ne fx$ by assumption. Now assume that
$\varphi fx\ne z'$. Then by what we have already shown, $\varphi(B_{fx})
\subseteq B'_{fx}$, so $x\notin B_{fx}\,$. From (UAT3), where we replace
$t,x,y$ by $x,fx,f^2x$, we infer that $d(x,fx)$ and $d(fx,f^2x)$ are
comparable. Therefore, $B_x\subseteq B_{fx}$ or $B_{fx}\subseteq B_x\,$.
Since $x\notin B_{fx}\,$, it follows that $B_{fx}\subsetuneq B_x\,$,
which proves that $f$ is strongly contracting on orbits.

We have shown that $z'$ is a weak $f$-attractor for $\varphi$. Our
theorem will thus follow from Theorem~\ref{AT1} once we have proved that
$f$ also satisfies (SC3). Take an $f$-nest $\cN$ and some $z\in
\bigcap\cN$. We have to show that $B_z\subseteq\bigcap\cN$, that is,
$B_z\subseteq B_x$ for all $B_x\in\cN$. If $\varphi x=z'$, then $z\in
B_x=\{x\}$, whence $z=x$ and $B_z=B_x\,$. Hence we will again assume
that $\varphi x\ne z'$.

Since $z\in\bigcap\cN\subseteq B_{fx}$, we have that
\[
\varphi z\,\in\,\varphi(B_{fx})\>\subseteq\>
B'_{fx}\>=\>B(\varphi fx,z')\;.
\]
It follows that
\[
\varphi(B_z)\>\subseteq\>B'_z\>=\>B(\varphi z,z')\>\subseteq\>
B(\varphi fx,z')\;.
\]
But we have already shown that $\varphi x \notin
B(\varphi fx,z')$, hence $x\notin B_z\,$. We have that
\begin{equation}                          \label{zfxx}
d'(\varphi z,z')\>\leq\>d'(\varphi fx,z')\><\>d'(\varphi x,z')
\end{equation}
and
\begin{equation}                            \label{B'zfx}
\varphi(B(x,z))\>\subseteq\>\varphi(B_x)\>\subseteq\> B'_x\>=\>
B(\varphi x,z')\;.
\end{equation}
From (UAT3), where we replace $t,x,y$ by $x,z,fz$, we infer that
$d(x,z)$ and $d(z,fz)$ are comparable. We find that $B(x,z)\subseteq
B(z,fz)$ or $B(z,fz)\subseteq B(x,z)\,$.
Since $x\notin B_z\,$, we obtain that $B_z\subseteq B(x,z)\subseteq
B_x\,$.
\end{proof}

Note that our condition (UAT3) is somewhat stronger than condition (8)
of [PR] because we do not start with a given function $f$ (which is
called $g$ in [PR]), but construct it in our proof. Rewritten in our present
notation, condition (8) of [PR] states:
\sn
{\bf (UAT3$'$)} \ if $d'(\varphi z,z')<d'(\varphi x,z')$ and
$B(z,fz)\cap B(x,fx)\ne\emptyset$, then $d(z,fz)<d(x,fx)$.
\sn
If the function $f$, strongly contracting on orbits, is already given,
then condition (UAT3) in Theorem~\ref{MTattr} can be replaced by
(UAT3$'$). Let us show how (UAT3$'$) is used at the end of the proof of
Theorem~\ref{MTattr} to deduce that (SC3) holds. For $z\in \bigcap\cN$,
we have to show that $B_z\subseteq B_x$ for all $B_x\in\cN$. As in the
above proof, one shows that (\ref{zfxx}) holds. Since $z\in
\bigcap\cN\subseteq B_x=B(x,fx)$, we have that $B(z,fz)\cap B(x,fx)
\ne\emptyset$. Hence by (UAT3$'$), $d(z,fz)< d(x,fx)$, which by
(\ref{umball}) implies that $B_z\subseteq B_x\,$. Observe that the
condition ``$d(z,fz)<d(x,fx)$'' in (UAT3$'$) can be replaced by the
weaker condition that $d(z,fz)$ and $d(x,fx)$ are comparable.

%
%
\section{Completeness by stages}            \label{sectcbs}
In this section, we consider valued fields $(K,v)$. In order to be
compatible with the way we have presented ultrametrics in the previous
sections, we write the valuation $v$ multiplicatively, that is, the
value group is a multiplicatively written ordered abelian group with
neutral element $1=v(1)$, and we add a smallest element $0=v(0)$. The
axioms for a valuation in this notation are:
\sn
{\bf (VF1)} \ $v(x)=0\>\Leftrightarrow x=0$,\n
{\bf (VF2)} \ $v(xy)\>=\>v(x)v(y)$,\n
{\bf (VF3)} \ $v(x+y)\>\leq\max\{v(x),v(y)\}$.
\sn
The underlying ultrametric is obtained by setting $d(x,y):=v(x-y)$.

\pars
We will work in the valuation ideal ${\cal M}=\{x\in K\mid v(x)<1\}$ of
$(K,v)$ since this facilitates the notation, and the typical
applications of the fixed point theorem we are going to prove can be
made to deal with functions $f:{\cal M}\rightarrow {\cal M}$.

We assume the reader to be familiar with the theory of pseudo Cauchy
sequences (see for instance \cite{[KA]}). P.~Ribenboim
introduced in \cite{[R]} the notion of \bfind{distinguished pseudo
Cauchy sequence}. For a pseudo Cauchy sequence $(a_{\nu})_{\nu<\lambda}$
in ${\cal M}$, indexed by a limit ordinal $\lambda$, the original
definition of ``distinguished'' is equivalent to the following: for
every $\mu<\lambda$ there is $\nu<\lambda$ such that
\[
v(a_{\nu+1}-a_\nu) \leq v(a_{\mu+1}-a_\mu)^2\;.
\]
The valued field $(K,v)$ is called \bfind{complete by stages} if every
distinguished pseudo Cauchy sequence in ${\cal M}$ has a pseudo limit in
$K$. Ribenboim proves in \cite{[R]} that every such field is henselian. For
this proof, one can use a theorem like the following:

\begin{theorem}                             \label{FPTcbs}
Take a valued field $(K,v)$ that is complete by stages and a contracting
function $f:{\cal M}\rightarrow {\cal M}$. If for every $x\in K$ there is
$j\in \N$ such that
\[
v(f^j x-f^{j+1} x)\>\leq\>v(x-fx)^2\;,
\]
then $f$ has a fixed point in $K$.
\end{theorem}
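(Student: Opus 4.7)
The plan is to apply Theorem~\ref{GFPT2} to the ultrametric ball space $(\mathcal{M},\cB_u)$ with balls $B_x := B(x,fx)$. Since $f$ is contracting, Lemma~\ref{csco} delivers (SC1), (SC3) and the inclusion $B_{fx} \subseteq B_x$ at no cost. The strict part of (SC2) is where the new hypothesis enters: if $x \neq fx$, then $x, fx \in \mathcal{M}$ forces $v(x-fx) < 1$ and hence $v(x-fx)^2 < v(x-fx)$; the hypothesis supplies $j$ with $v(f^j x - f^{j+1} x) \leq v(x-fx)^2 < v(x-fx)$, and implication (\ref{cb}) gives $B_{f^j x} \subsetuneq B_x$. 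Thus $f$ is self-contractive, and by (SC3) the intersection condition of Theorem~\ref{GFPT2} reduces to showing $\bigcap\mathcal{N} \neq \emptyset$ for every $f$-nest $\mathcal{N}$.

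Given such an $\mathcal{N} = \{B_x \mid x \in S\}$ with $S$ closed under $f$, I fix $x_0 \in S$ and examine the orbit $b_m := f^m x_0$ with $\alpha_m := v(b_m - b_{m+1})$. Contractiveness makes $(\alpha_m)$ non-increasing, while the quadratic decay hypothesis applied at any $b_m$ rules out stabilisation at a positive value. Hence either some $\alpha_{m_0} = 0$, in which case $b_{m_0} \in S$ is already a fixed point in $\bigcap\mathcal{N}$, or $(\alpha_m)$ runs through a strictly decreasing sequence of distinct values $\alpha_{(0)} > \alpha_{(1)} > \cdots$ organised in plateaus. Setting $c_k := b_{n_k}$ with $n_k$ the last index of the $k$-th plateau makes the first telescoped difference of $c_k - c_{k+1}$ strictly dominate, whence $v(c_k - c_{k+1}) = \alpha_{(k)}$; the sequence $(c_k)$ is then pseudo Cauchy, and distinguished by the quadratic decay hypothesis, so completeness by stages provides a pseudo limit $\ell \in \mathcal{M}$ lying in $\bigcap_m B_{b_m}$.

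The main obstacle is that $\bigcap_m B_{b_m}$ may properly contain $\bigcap\mathcal{N}$: the set $S$ can carry elements $y$ whose ball $B_y$ sits strictly inside every orbit ball $B_{b_m}$. To close this gap I iterate transfinitely. At a successor step, given $a_\nu$ not yet in $\bigcap\mathcal{N}$, I choose a ``lower'' $y_\nu \in S$ with $a_\nu \notin B_{y_\nu}$ and let $a_{\nu+1}$ be the pseudo limit of the orbit of $y_\nu$, obtained as above; the governing invariant $\gamma_\nu := v(a_\nu - f a_\nu)$ strictly decreases thanks to the quadratic decay. At a limit step I take $a_\lambda$ to be the pseudo limit of the already constructed distinguished pseudo Cauchy sequence $(a_\nu)_{\nu < \lambda}$. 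Since $(\gamma_\nu)$ is a strictly descending chain in the value group $\Gamma$, which is a set, the process must terminate at some ordinal $\nu_0$ with $\gamma_{\nu_0} = 0$; the resulting fixed point $a_{\nu_0}$ then lies in $\bigcap\mathcal{N}$, as required. The chief technical points are verifying the distinguished pseudo Cauchy property across plateaus, and ensuring that pseudo limits at limit ordinals remain in $\mathcal{M}$ while continuing to drive $\gamma$ strictly downward.
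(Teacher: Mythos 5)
Your first half is exactly the paper's argument: with $B_x:=B(x,fx)$, contractivity together with Lemma~\ref{csco} gives (SC1), (SC3) and $B_{fx}\subseteq B_x$, and the hypothesis yields $v(f^jx-f^{j+1}x)\leq v(x-fx)^2<v(x-fx)$ (the strict inequality because $v(x-fx)<1$ on $\mathcal M$), so (\ref{cb}) gives the strict part of (SC2). The divergence is in proving $\bigcap\cN\neq\emptyset$ for an $f$-nest $\cN$, and here you miss the intended shortcut. Every ball of $\cN$ has the form $B(x,fx)$ with $x\in S$ and $S$ closed under $f$, so $B(f^jx,f^{j+1}x)\in\cN$ with $v(f^jx-f^{j+1}x)\leq v(x-fx)^2$; that is precisely the definition of a \emph{distinguished nest of balls}, and Lemma~\ref{cbs=dni} (the stated equivalence between completeness by stages and nonempty intersection of distinguished nests) finishes the proof in one line. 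Your transfinite pseudo-limit construction is, in effect, an attempt to reprove that lemma inline.

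Within that construction there is a genuine gap: the invariant you propose does not do what you claim. The element $a_\nu$ is a pseudo limit, not an element of $S$, so $B_{a_\nu}$ is not a member of the nest; the balls $B_{a_\nu}$ and $B_{y_\nu}$ can be disjoint (both sit inside $\bigcap_m B_{f^m y_{\nu-1}}$, which may properly contain many pairwise disjoint balls), and then $a_\nu\notin B_{y_\nu}$ gives no comparison between $v(y_\nu-fy_\nu)$ and $\gamma_\nu=v(a_\nu-fa_\nu)$. Hence the estimate $\gamma_{\nu+1}\leq v(y_\nu-fy_\nu)$ does not yield $\gamma_{\nu+1}<\gamma_\nu$, and the claim that the process must end with $\gamma_{\nu_0}=0$ is unfounded. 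What actually decreases strictly is the radius $v(y_\nu-fy_\nu)$ of the chosen nest balls, equivalently $v(a_{\nu+1}-a_\nu)=v(a_\nu-y_\nu)$, since $B_{y_{\nu+1}}\subsetuneq B_{f^my_\nu}\subseteq B_{y_\nu}$; and the correct termination condition is that no $y_{\nu_0}\in S$ with $a_{\nu_0}\notin B_{y_{\nu_0}}$ exists, i.e.\ $a_{\nu_0}\in\bigcap\cN$ --- which is all you need, since (SC3) then delivers the hypothesis of Theorem~\ref{GFPT2}. The remaining items you flag (the distinguished pseudo Cauchy property of the plateau subsequence and of $(a_\nu)$ at limit stages, and that pseudo limits stay in $\mathcal M$) do hold, but verifying them amounts to proving Lemma~\ref{cbs=dni}; I recommend replacing the whole second half by the observation that $f$-nests are distinguished.
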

\sn
Note that $v(x-fx)<1$ since $x,fx\in {\cal M}$. In the theorem,
${\cal M}$ can be replaced by any ultrametric ball $B$ in $K$
as long as $v(x-fx)<1$ for all $x\in B$.

In order to deduce this theorem from Theorem~\ref{GFPT2}, we need to
show the connection between completeness by stages and nests of balls
with certain properties. We will call a nest $\cN$ of ultrametric balls
in ${\cal M}$ \bfind{distinguished} if for all $x,y\in {\cal M}$ with
$B(x,y)\in\cN$
there are $x',y'\in {\cal M}$ such that $v(x'-y')\leq v(x-y)^2$ and
$B(x',y')\in\cN$. Then the following holds:

\begin{lemma}                               \label{cbs=dni}
A valued field $(K,v)$ with valuation ideal ${\cal M}$ is complete by
stages if and only if each distinguished nest of ultrametric balls in
${\cal M}$ has a nonempty intersection.
\end{lemma}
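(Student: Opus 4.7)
The plan is to prove both implications using the natural correspondence $B_\nu \leftrightarrow B(a_\nu,a_{\nu+1})$ between chains of ultrametric balls and pseudo Cauchy sequences, writing $r_\nu := v(x-y)$ for the ``radius'' of a ball $B(x,y)$ and $\gamma_\nu := v(a_{\nu+1}-a_\nu)$.

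For the ``if'' direction, given a distinguished pseudo Cauchy sequence $(a_\nu)_{\nu<\lambda}$ in $\mathcal{M}$, I would form $\mathcal{N}:=\{B(a_\nu,a_{\nu+1})\mid\nu<\lambda\}$. The strict decrease of $\gamma_\nu$ combined with the ultrametric triangle law gives $B(a_\nu,a_{\nu+1})\subsetuneq B(a_\mu,a_{\mu+1})$ whenever $\mu<\nu$, so $\mathcal{N}$ is a nest of balls in $\mathcal{M}$; the squared-radius property of the sequence is precisely distinguishedness of $\mathcal{N}$. By hypothesis there is some $a\in\bigcap\mathcal{N}$, and then $v(a-a_\nu)\leq\gamma_\nu$ for all $\nu$, which is the pseudo limit property (and $a\in B(a_0,a_1)\subseteq\mathcal{M}\subseteq K$).

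For the ``only if'' direction, I build by transfinite recursion a decreasing sequence $(B_\nu)_{\nu<\lambda}$ in a given distinguished nest $\mathcal{N}$: start with any $B_0\in\mathcal{N}$; at successor $\nu+1$, apply distinguishedness of $\mathcal{N}$ to pick $B_{\nu+1}\in\mathcal{N}$ with $r_{\nu+1}\leq r_\nu^2<r_\nu$; at limit stage $\mu$, pick some $B_\mu\in\mathcal{N}$ contained in $\bigcap_{\nu<\mu}B_\nu$ if such exists, otherwise terminate with $\lambda:=\mu$. Because the successor step always succeeds but strictly shrinks the chain, the recursion must halt at some limit ordinal $\lambda$. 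The stopping rule forces cofinality: for any $B\in\mathcal{N}$, comparability in the nest rules out $B\subseteq B_\nu$ for all $\nu<\lambda$ (else $B$ would have been a valid $B_\lambda$), so $B_\nu\subseteq B$ for some $\nu$, giving $\bigcap_{\nu<\lambda}B_\nu=\bigcap\mathcal{N}$. Choose $a_\nu\in B_\nu\setminus B_{\nu+1}$; then $r_{\nu+1}<\gamma_\nu\leq r_\nu$ yields strict decrease of $\gamma_\nu$ (pseudo Cauchy) and the estimate $\gamma_{\nu+2}\leq r_{\nu+2}\leq r_{\nu+1}^2<\gamma_\nu^2$ yields distinguishedness of the sequence. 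Completeness by stages then produces a pseudo limit $a\in K$, and $v(a-a_\nu)\leq\gamma_\nu\leq r_\nu$ places $a$ in each $B_\nu$, hence in $\bigcap\mathcal{N}$ by cofinality.

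The main obstacle is the transfinite construction in the ``only if'' direction: one must ensure the recursion terminates at a limit ordinal and that the resulting chain is cofinal in $\mathcal{N}$. Without cofinality, the pseudo limit would only be known to lie in $\bigcap_{\nu<\lambda}B_\nu$, which could strictly contain $\bigcap\mathcal{N}$; the rescue is the observation that in a nest any two balls are comparable, so the stopping rule of the recursion automatically produces a cofinal chain.
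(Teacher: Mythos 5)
Your proof is correct and follows essentially the same route as the paper, which only sketches the argument: translate back and forth between distinguished nests and distinguished pseudo Cauchy sequences, extracting a cofinal well-ordered chain from the nest for the harder direction. Your transfinite recursion is simply a concrete implementation of the paper's Lemma~\ref{wosn} (existence of a cofinal well-ordered subnest) interleaved with the squared-radius step, and it correctly addresses the cofinality issue you identify.
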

\sn
In this lemma, ${\cal M}$ can be replaced by the valuation ring
${\cal O}$, and also by the ultrametric balls $a+{\cal M}$ and $a+
{\cal O}$ for all $a\in K$. The proof of the lemma is similar to the
proof of the fact that an ultrametric space is spherically complete if
and only if every pseudo Cauchy sequence in this space has a pseudo
limit. It is based on the following easy observation:

\begin{lemma}                               \label{wosn}
Every nest of balls admits, in the ordering
given by reverse inclusion, a cofinal well ordered subnest.
\end{lemma}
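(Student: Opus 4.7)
The plan is to construct the cofinal well-ordered subnest by transfinite recursion with the axiom of choice. First I would pick any $B_0 \in \cN$. Having defined $B_\beta \in \cN$ for all $\beta < \alpha$ in such a way that $B_\gamma \subsetneq B_\beta$ whenever $\beta < \gamma < \alpha$, I would consider the set
\[
S_\alpha \;:=\; \{B \in \cN \mid B \subsetneq B_\beta \text{ for every } \beta < \alpha\}.
\]
If $S_\alpha$ is nonempty, I would use choice to select some $B_\alpha \in S_\alpha$. If $S_\alpha$ is empty, I halt the construction and set $\lambda := \alpha$.

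Since the chosen $B_\alpha$ are pairwise distinct elements of the set $\cN$, the recursion cannot continue beyond an ordinal of cardinality exceeding $|\cN|$, so $\lambda$ exists. Let $\cN' := \{B_\alpha \mid \alpha < \lambda\}$. By construction, $\cN'$ is strictly decreasing with respect to inclusion, hence it is well ordered by reverse inclusion: the least element of any nonempty subset (in the reverse-inclusion order) is the one with smallest ordinal index.

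It remains to verify cofinality of $\cN'$ in $\cN$, i.e., that for every $B \in \cN$ there exists $B_\alpha \in \cN'$ with $B_\alpha \subseteq B$. Take an arbitrary $B \in \cN$. If $B = B_\alpha$ for some $\alpha < \lambda$, there is nothing to prove. Otherwise, suppose for contradiction that $B_\alpha \not\subseteq B$ for every $\alpha < \lambda$. Since $\cN$ is totally ordered by inclusion, this forces $B \subsetneq B_\alpha$ for every $\alpha < \lambda$, which places $B$ inside $S_\lambda$ and contradicts the halting condition $S_\lambda = \emptyset$. The only step requiring care is precisely this final conversion of ``$B_\alpha \not\subseteq B$'' into ``$B \subsetneq B_\alpha$'', which is exactly where the nest (total-order) hypothesis is used; once that conversion is in hand, everything else is routine.
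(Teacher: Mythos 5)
Your proof is correct. The paper does not actually supply an argument for this lemma (it is presented as an ``easy observation''), and your transfinite-recursion construction --- selecting at each stage a ball properly contained in all previously chosen ones until none remains, bounding the length by the cardinality of $\cN$, and then using the total ordering of the nest to turn ``$B_\alpha\not\subseteq B$'' into ``$B$ properly contained in every $B_\alpha$'' for the cofinality check --- is the standard proof that every totally ordered set has a well-ordered cofinal subset, and it fills the gap cleanly.
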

Now the method of proof is to associate such a coinitial well ordered
subnest with a pseudo Cauchy sequence such that each pseudo limit will
lie in the intersection of the nest --- and vice versa. Since every
subnest of a distinguished nest is again distinguished, this will give
rise to a distinguished pseudo Cauchy sequence. Conversely, if
$(a_{\nu})_{\nu<\lambda}$ is a distinguished pseudo Cauchy sequence in
${\cal M}$, then $\{B(a_\nu,a_{\nu+1})\mid \nu<\lambda\}$ is a
distinguished nest of balls in ${\cal M}$.

\sn
{\bf Proof of Theorem~\ref{FPTcbs}:} \
As before, we set $B_x:=B(x,fx)$ and note that $x\in B_x\,$. Since $f$
is contracting, we have that $v(fx-f^2x)\leq v(x-fx)$, which by
(\ref{umball}) implies that $B_{fx}\subseteq B_x\,$. If $fx\ne x$, then
by assumption, there is $i\in\N$ such that $v(f^i x-f^{i+1} x)\>\leq\>
v(x-fx)^2<v(x-fx)$; this yields that $x\notin B_{f^ix}$ and $B_{f^ix}
\subsetuneq B_x\,$. We have now proved that $f$ is strongly contracting
on orbits. Since $f$ is contracting, this implies by way of
Lemma~\ref{csco} that $f$ also satisfies (SC3).

Take an $f$-nest $\cN$. Every ball in $\cN$ is of the form $B(x,fx)$ and
there is some $i\in\N$ such that $v(f^i x-f^{i+1} x)\>\leq\> v(x-fx)^2$.
By the definition of an $f$-nest, $B_{f^ix}\in\cN$. Thus, $\cN$ is
distinguished. Since $(K,v)$ is assumed to be complete by stages,
Lemma~\ref{cbs=dni} shows that $\cN$ has a nonempty intersection. Since
$f$ also satisfies (SC3) and is strongly contracting on orbits, this
implies that the conditions of Theorem~\ref{GFPT2} are satisfied, and we
obtain the assertion of Theorem~\ref{FPTcbs}.

%
%
\section{Banach's Fixed Point Theorem}      \label{sectBFT}
Banach's Fixed Point Theorem states that every strictly contracting
function on a complete metric space $(X,d)$ has a unique fixed point. A
function $f:X\rightarrow X$ is called \bfind{strictly contracting} if
there is a positive real number $C<1$ such that $d(fx,fy)\leq Cd(x,y)$
for all $x,y\in X$. We will show now how Banach's Fixed Point Theorem
fits into the setting of Theorems~\ref{GFPT2} and~\ref{GFPT1}. We work
in the ball space $(X,\cB)$ where $\cB$ consists of all balls $\{y\in
X\mid d(x,y)\leq r\}$ for $x\in X$ and $r\in\R^{\geq 0}$. This ball
space is spherically complete since $(X,d)$ is complete.

We will prove the existence of fixed points under the slightly more
general assumption that $f$ is
\sn
1) \ \bfind{contracting}, that is, $d(fx,fy)\leq d(x,y)$ for all
$x,y\in X$, and
\n
2) \ \bfind{strictly contracting on orbits}, that is, there is a
positive real number $C<1$ such that $d(fx,f^2x)\leq Cd(x,fx)$ for all
$x\in X$.

\pars
Take any $x\in X$. Then
\begin{eqnarray*}
d(x,f^ix) & \leq & d(x,fx)+d(fx,f^2x)+\ldots+d(f^{i-1}x,f^ix)\\
 & \leq & d(x,fx)(1+C+C^2+\ldots+C^{i-1})\\
 & \leq & d(x,fx)\sum_{i=0}^{\infty}C^i\>=\>\frac{d(x,fx)}{1-C}\;.
\end{eqnarray*}
Hence if we set
\[
B_x\>:=\>\left\{y\in X\mid d(x,y)\,\leq\,\frac{d(x,fx)}{1-C}\right\}\;,
\]
then $f^ix\in B_x$ for $i\geq 0$. In particular, $x\in B_x\,$, hence
(SC1) holds.

We wish to show that $B_{fx}\subseteq B_x\,$. Take any
$y\in B_{fx}\,$. Then
\begin{eqnarray*}
d(x,y) & \leq & d(x,fx)\,+\,d(fx,y)\>\leq\>d(x,fx)\,+\,
\frac{d(fx,f^2x)}{1-C}\\
& \leq & d(x,fx)\,+\, \frac{C}{1-C}\,d(x,fx)\>=\>\frac{d(x,fx)}{1-C}\;.
\end{eqnarray*}
Thus, $y\in B_x\,$, which proves our assertion.
\pars
Since $C<1$, there is some $i\geq 1$ such that
\[
\frac{C^i}{1-C}\><\>\frac{1}{2}\;.
\]
Then
\[
\frac{d(f^ix,f^{i+1}x)}{1-C}\>\leq\>\frac{C^i}{1-C}\,d(x,fx)\><\>
\frac{1}{2}\,d(x,fx)\;,
\]
which implies that $x$ and $fx$ cannot both lie in $B_{f^ix}\,$.
Therefore, $B_{f^ix}\subsetuneq B_x$ and we have now proved that (SC2)
holds.

Next, we show that also (SC3) holds. Take an $f$-nest $\cN$ and assume
that $z\in \bigcap\cN$. Pick any $B_x\in\cN$ and $i>0$. Since $f$ is
contracting, $d(f^i x,fz)\leq d(f^{i-1}x,z)=d(z,f^{i-1}x)$. Using that
$z\in\cN \subseteq B_{f^ix}$ for all $i$, we compute:
\begin{eqnarray*}
d(z,fz) & \leq & d(z,f^ix)+d(f^ix,fz)\>\leq\>d(z,f^ix)+d(z,f^{i-1}x)\\
& \leq & \frac{d(f^ix,f^{i+1}x)}{1-C}+
\frac{d(f^{i-1}x,f^i x)}{1-C} \\
& \leq & \frac{C^i}{1-C}\,d(x,fx)+\frac{C^{i-1}}{1-C}\,d(x,fx)\>=\>
C^{i-1}\frac{C+1}{1-C}\,d(x,fx)\;.
\end{eqnarray*}
Since $\lim_{i\rightarrow\infty}C^i=0$, we obtain that $fz=z$, so we
have found a fixed point. It follows that $B_z\subseteq \bigcap\cN$ (in
fact, $\bigcap\cN=\{z\}=B_z$). This shows that $f$ is self-contractive.
%

\pars
Note that if $f$ is strictly contracting, then the fixed point is
unique. Indeed, if there were distinct fixed points $x,y$, then
$d(x,y)=d(fx,fy)<d(x,y)$, a contradiction.

%
%
\section{The case of ordered abelian groups and fields}
\label{sectna}
In this section we will discuss various forms of fixed point theorems in
the case of ordered abelian groups and fields. Here, we always mean that
the ordering is total. The ordering induces a natural valuation; we will
recall its definition in Section~\ref{sectprel}. This valuation is
nontrivial if and only if the ordering is nonarchimedean. Since the
valuation induces an ultrametric, our ultrametric fixed point theorems
can be translated to the present case. We will do this in
Section~\ref{sectordu}.

However, the most natural idea to derive a ball space from the ordering
of an ordered abelian group $(G,<)$ is to define the \bfind{order balls}
in $G$ to be the sets of the form
\[
B_o(g;r)\>:=\>\{z\in G\mid |g-z|\leq r\}
\]
for arbitrary $g\in G$ and nonnegative $r\in G$. We set
\[
\cB_o\>=\>\cB_o(G,<)\>:=\> \{B_o(g;r) \mid g\in G,\,0\leq r\in G\}\;.
\]
Then $(G,\cB_o)$ is the \bfind{order ball space} associated with
$(G,<)$. In Section~\ref{sectob} we will state a fixed point theorem
corresponding to the order ball space.

\pars
Before we continue, we need some preliminaries and general background.

%
%
\subsection{Preliminaries on non-archimedean
ordered abelian groups and fields}          \label{sectprel}
Take an ordered abelian group $(G,<)$. Two elements $a,b\in G$ are
called \bfind{archimedean equivalent} if there is some $n\in\N$ such
that $n|a|\geq |b|$ and $n|b|\geq |a|$. The ordered group $(G,<)$ is
archimedean ordered if all nonzero elements are archimedean equivalent.
If $0\leq a<b$ and $na<b$ for all $n\in\N$, then ``$a$ is
infinitesimally smaller than $b$'' and we will write $a\ll b$.

We define the \bfind{natural valuation} of $(G,<)$ as follows. We denote
by $va$ the archimedean equivalence class of $a$. The set of archimedean
equivalence classes is ordered as follows: $va<vb$ if and only if
$|a|<|b|$ and $a$ and $b$ are not archimedean equivalent, that is,
if $n|a|<|b|$ for all $n\in\N$. We write $0:=v0\,$; this is the minimal
element in the totally ordered set of equivalence classes. The function
$a\mapsto va$ is a group valuation on $G$, i.e., it satisfies
$vx=0\Leftrightarrow x=0$ and the ultrametric triangle law
\sn
{\bf (UT)} \ $v(x-y)\>\leq\max\{vx,vy\}$.
\sn
The natural valuation induces an ultrametric defined by
%
\[
d(x,y)\>:=\>v(x-y)
\]
and hence an ultrametric ball space, with the set $\cB_u=\cB_u(G,<)$ of
balls $B_u(x,y)$ defined as in Section~\ref{sectult}. We will call
$(G,\cB_u)$ the \bfind{(natural) ultrametric ball space} of $(G,<)$.
Note that all ultrametric balls are cosets of convex subgroups in $G$.

\pars
If $(K,<)$ is an ordered field, then we consider the natural
valuation on its ordered additive group and define $va\cdot vb:=v(ab)$.
This turns the set of archimedean classes into a multiplicatively
written ordered abelian group, with neutral element $1:=v1$ and inverses
$(va)^{-1}=v(a^{-1})\,$. In this way, $v$ becomes a field valuation
(with multiplicatively written value group). It is the finest valuation
on $K$ which is compatible with the ordering. The residue field $Kv:=
{\cal O}/{\cal M}$ is archimedean ordered, hence by the version of the
Theorem of H\"older for ordered fields, it can be embedded in the
ordered field $\R$. Via this embedding, we will always identify it with
a subfield of $\R$.

We know from \cite[Theorem 6]{[KA]} that $K$ can be embedded in the
power series field with exponents in the value group and coefficients in
the residue field of its natural valuation. (A nontrivial factor set may
be needed, unless the positive part of the residue field is closed under
radicals, which for instance is the case if $K$ is real closed.)
Moreover, the ultrametric ball space of $K$ is spherically complete if
and only if the embedding is onto.

%
%
\subsection{Ultrametric balls}              \label{sectordu}
Via the natural valuation, the ultrametric fixed point theorems provide
fixed point theorems for ordered abelian groups and fields. A valuation
is called \bfind{spherically complete} if its associated ultrametric
ball space is spherically complete, and it is called \bfind{complete by
stages} if its associated ultrametric ball space is complete by stages.

Take an ordered abelian group $G$ and a function $f:G\rightarrow G$. It
will be called \bfind{o-contracting} if
\[
|fx-f^2x|\leq |x-fx|
\]
for all $x\in G$; note that an o-contracting function is also
contracting in the ultrametric sense. The property
\begin{equation}                            \label{oscoo}
x\ne fx \>\Longrightarrow\>\exists i\geq 1:\>
|f^ix-f^{i+1}x|\>\ll\> |x-fx|
\end{equation}
implies the property (\ref{scoo}).
%
%
Hence, the following theorem is an immediate consequence of
Theorem~\ref{UFPT}. In order to obtain it for an ordered field $K$, take
$G$ to be the additive group of $K$.

\begin{theorem}
Take an ordered abelian group $G$ whose natural valuation is spherically
complete. If $f:G\rightarrow G$ is an o-contracting function that
satisfies (\ref{oscoo}), then it has a fixed point.
\end{theorem}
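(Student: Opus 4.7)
The plan is to translate the hypotheses into the natural ultrametric ball space $(G,\cB_u)$ of Section~\ref{sectprel} and then quote Theorem~\ref{UFPT} directly. Section~\ref{sectprel} already supplies the bridge: the natural valuation $v$ of $G$ induces the ultrametric $d(x,y):=v(x-y)$ valued in the totally ordered set of archimedean classes, and the natural valuation of $G$ is declared spherically complete exactly when $(G,\cB_u)$ is. So the ``spherically complete ultrametric space'' hypothesis of Theorem~\ref{UFPT} is automatic.

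Next I would verify the two remaining hypotheses of Theorem~\ref{UFPT}. The monotonicity I need is the basic fact that for $a,b\in G$ one has $|a|\leq|b|\Rightarrow va\leq vb$: either $a$ and $b$ are archimedean equivalent (so $va=vb$), or they are not, in which case $|a|<|b|$ places them in different archimedean classes with $va<vb$. Applied to the o-contracting inequality $|fx-f^2x|\leq|x-fx|$, this gives $d(fx,f^2x)\leq d(x,fx)$, and---as the parenthetical remark accompanying the definition of o-contracting asserts---$f$ is contracting in the ultrametric sense. For condition (\ref{scoo}), suppose $x\ne fx$ and choose $i\geq1$ from (\ref{oscoo}) with $|f^ix-f^{i+1}x|\ll|x-fx|$. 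By definition of $\ll$, these two elements lie in strictly different archimedean classes, with the former in the strictly lower one, so $v(f^ix-f^{i+1}x)<v(x-fx)$; this is exactly~(\ref{scoo}).

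With spherical completeness, ultrametric contraction, and (\ref{scoo}) all in hand, Theorem~\ref{UFPT} delivers a fixed point of $f$, which is the desired conclusion. For ordered fields $K$, one simply takes $G$ to be the additive group $(K,+,<)$ and repeats the argument verbatim, since the natural valuation of the ordered field is by construction the natural valuation of its ordered additive group. The only substantive point in the translation is the monotonicity $|a|\leq|b|\Rightarrow va\leq vb$ and its strict version for $\ll$; both are immediate from the definition of the natural valuation, so no step of the plan poses a real obstacle, which is why the author can call the result ``an immediate consequence'' of Theorem~\ref{UFPT}.
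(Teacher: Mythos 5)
Your reduction is exactly the paper's: pass to the natural ultrametric ball space, note that spherical completeness of the natural valuation is by definition spherical completeness of that ball space, check that the hypotheses of Theorem~\ref{UFPT} transfer, and quote it. The monotonicity $|a|\leq|b|\Rightarrow va\leq vb$ and the observation that $a\ll b$ forces $va<vb$ are both correct, so your derivation of (\ref{scoo}) from (\ref{oscoo}) is sound.

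The one step that does not hold up as written is the contraction step. From $|fx-f^2x|\leq|x-fx|$ you correctly get $d(fx,f^2x)\leq d(x,fx)$, but you then assert that $f$ is ``contracting in the ultrametric sense'', which by the paper's definition means $d(fx,fy)\leq d(x,y)$ for \emph{all pairs} $x,y$ --- a strictly stronger, two-variable statement that does not follow from the orbit inequality you derived. Theorem~\ref{UFPT} genuinely needs the two-variable version: its proof goes through Lemma~\ref{csco}, where ``$f$ contracting'' is applied to an arbitrary $z\in B_x$ to obtain $d(fx,fz)\leq d(x,z)$, hence (\ref{zinBx}) and (SC3). The gap is not cosmetic: on the lexicographically ordered Hahn product $G=\R^{\N}$, whose natural valuation is spherically complete, the map sending $g$ to $e_1+\dots+e_{N(g)}$, where $N(g)$ is the first coordinate at which $g$ differs from $\sum_n e_n$ (and sending $\sum_n e_n$ to $0$), has no fixed point yet satisfies $|fx-f^2x|\leq|x-fx|$ and (\ref{oscoo}) for every $x$; so the theorem is actually false under the literal one-variable reading of ``o-contracting''. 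The resolution is that ``o-contracting'' must be read as $|fx-fy|\leq|x-y|$ for all $x,y\in G$ --- the reading also forced by its role alongside the separate on-orbit condition in Theorem~\ref{OAG} --- under which your sentence becomes a correct one-line consequence of the monotonicity you proved and the rest of your argument goes through verbatim. Either state and use that two-variable definition explicitly, or your appeal to Theorem~\ref{UFPT} is unjustified.
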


Take an ordered field $K$ and let $v$ denote its natural valuation. Then
the valuation ideal ${\cal M}$ of $v$ is the set of all infinitesimals
of $K$, that is, the elements $x\in K$ such that $|x|\ll 1$. The next
theorem follows directly from Theorem~\ref{FPTcbs}. We suspect
that the theorem becomes false if ${\cal M}$ is replaced by the
valuation ring ${\cal O}$.

\begin{theorem}
Take an ordered field $K$ with ${\cal M}$ the set of its
infinitesimals, whose natural valuation is complete by stages.
If $f:{\cal M}\rightarrow {\cal M}$ is an o-contracting function and for
every $x\in {\cal M}$ there is $j\in \N$ such that
\[
|f^j x-f^{j+1} x|\>\leq\>|x-fx|^2\;,
\]
then $f$ has a fixed point in ${\cal M}$.
\end{theorem}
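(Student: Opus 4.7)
The plan is to reduce the statement to Theorem~\ref{FPTcbs} by translating everything into the ultrametric setting provided by the natural valuation $v$ of $K$. Recall from Section~\ref{sectprel} that the natural valuation of an ordered field is a (multiplicatively written) field valuation, in particular $v(ab)=v(a)v(b)$, and that whenever $|a|\leq|b|$ in $K$ one has $va\leq vb$ in the value group (since either $|a|$ and $|b|$ are archimedean equivalent, giving $va=vb$, or $|a|\ll|b|$, giving $va<vb$). Completeness by stages of the natural valuation is assumed, so $(K,v)$ is complete by stages in the sense of Section~\ref{sectcbs}.

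First I would verify that the hypothesis ``$f$ o-contracting'' delivers the hypothesis ``$f$ contracting'' required by Theorem~\ref{FPTcbs}: this is precisely the remark immediately following the definition of o-contracting, namely that $|fx-f^2x|\leq|x-fx|$ implies $v(fx-f^2x)\leq v(x-fx)$ via the rule $|a|\leq|b|\Rightarrow va\leq vb$ recalled above. Hence $f$ is a contracting function on the ultrametric ball $\mathcal{M}$.

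Next I would translate the main hypothesis. For $x\in\mathcal{M}$ pick $j\in\N$ with $|f^jx-f^{j+1}x|\leq|x-fx|^2$. Since $|x-fx|^2=|(x-fx)^2|$, applying $v$ and using multiplicativity gives
\[
v(f^jx-f^{j+1}x)\;\leq\;v\bigl((x-fx)^2\bigr)\;=\;v(x-fx)\cdot v(x-fx)\;=\;v(x-fx)^2,
\]
which is exactly the iteration hypothesis of Theorem~\ref{FPTcbs}. Note also that $v(x-fx)<1$ because $x,fx\in\mathcal{M}$, so we are indeed in the valuation ideal as demanded by Theorem~\ref{FPTcbs}.

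All hypotheses of Theorem~\ref{FPTcbs} being met, we conclude that $f$ has a fixed point in $K$; since $f$ takes values in $\mathcal{M}$, any fixed point $z=fz$ automatically lies in $\mathcal{M}$. I do not expect a genuine obstacle here: the only point requiring attention is the passage from the ordered-field inequality $|a|\leq|b|^2$ to the valuation-theoretic inequality $va\leq(vb)^2$, which is just the multiplicativity of $v$ combined with $|a|\leq|b|\Rightarrow va\leq vb$.
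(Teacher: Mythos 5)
Your proposal is correct and follows exactly the route the paper takes: the paper's entire proof is the remark that the theorem ``follows directly from Theorem~\ref{FPTcbs}'', and you have simply made the routine translation explicit (o-contracting gives the contracting hypothesis via the paper's own remark, and $|f^jx-f^{j+1}x|\leq|x-fx|^2$ passes to $v(f^jx-f^{j+1}x)\leq v(x-fx)^2$ by multiplicativity of $v$ together with $|a|\leq|b|\Rightarrow va\leq vb$). Nothing further is needed.
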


%
%
\subsection{Order balls}                    \label{sectob}
Take an ordered abelian group $(G,<)$. We call a function $f:G
\rightarrow G$ \bfind{strictly o-contracting on orbits} if there is a
positive rational number $\frac{m}{n}<1$ with $m,n\in\N$ such that
$n|fx-f^2x| \leq m|x-fx|$ for all $x\in X$. Note that $n|fx-f^2x|\leq
m|x-fx|$ holds if and only if $|fx-f^2x|\leq \frac{m}{n}|x-fx|$ in the
divisible hull of $G$ with the unique extension of the ordering.

The following theorem is a consequence of Theorem~\ref{GFPT1}:
\begin{theorem}                             \label{OAG}
Suppose that the order ball space $(G,\cB_o)$ associated with the
ordered abelian group $(G,<)$ is spherically complete. Then every
o-contracting function on $G$ which is strictly o-contracting on
orbits has a fixed point.
\end{theorem}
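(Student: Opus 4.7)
The plan is to imitate the ball-space proof of Banach's Fixed Point Theorem in Section~\ref{sectBFT} and invoke Theorem~\ref{GFPT1}. The Banach rate $C<1$ is replaced by the rational $m/n<1$, and ``division by $1-C$'' by multiplication by a positive integer $N$ with $N(n-m)\ge n$; such $N$ exists because $n-m\ge 1$ (one may take $N=n$). For every $x\in G$ I set
\[
B_x\;:=\;B_o(x;\,N\,|x-fx|)\in\cB_o.
\]

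For (SC1) and (SC2) I follow Section~\ref{sectBFT} literally, only with integer arithmetic in place of real arithmetic. Iterating the strict o-contractiveness yields $n^i|f^ix-f^{i+1}x|\le m^i|x-fx|$, and a telescoping triangle inequality gives the integer bound $(n-m)|x-f^ix|\le n|x-fx|$; since $n-m\ge 1$ this forces $|x-f^ix|\le N|x-fx|$, so the orbit of $x$ lies in $B_x$ and (SC1) holds. The inclusion $B_{fx}\subseteq B_x$ then follows by one more triangle inequality combined with the integer relation $Nm+n\le Nn$ and cancellation in $G$; it is strict when $fx\ne x$ because the two order balls then have distinct centers and are therefore different closed intervals in $G$.

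The main obstacle is (SC3). Let $\cN=\{B_y:y\in S\}$ be an $f$-nest and $z\in\bigcap\cN$; I must show $B_z\subseteq B_y$, i.e.\ $N|z-fz|+|y-z|\le N|y-fy|$, for every $y\in S$. Since $f(S)\subseteq S$, every $B_{f^iy}$ belongs to $\cN$, so $|z-f^iy|\le N|f^iy-f^{i+1}y|$ for all $y\in S$ and $i\ge 0$. Imitating the concluding Banach computation,
\[
|z-fz|\;\le\;|z-f^{i+1}y|+|f^{i+1}y-fz|,
\]
bounding each summand by the ball containment and the contractivity of $f$, and multiplying through by the common factor $n^{i+1}$, produces an integer inequality of the form
\[
n^{i+1}\,|z-fz|\;\le\;N(m+n)\,m^i\,|y-fy|\qquad\text{for every }i\ge 0.
\]
In $G$ this forces $k|z-fz|\le|y-fy|$ for every $k\in\N$; letting $y$ move along the $f$-orbit in $S$ shrinks $|y-fy|$ geometrically by the factor $m/n$ and collapses $|z-fz|$ to $0$. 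Hence $fz=z$, so $B_z=\{z\}\subseteq\bigcap\cN$; $f$ is self-contractive on the spherically complete ball space $(G,\cB_o)$ and Theorem~\ref{GFPT1} yields the fixed point. The delicate step is precisely this last one: replacing the real-analytic ``$C^i\to 0$'' of Section~\ref{sectBFT} by the integer-cancellation chain $n^{i+1}|z-fz|\le N(m+n)m^i|y-fy|$ and using the geometric decrease of orbit radii to force $|z-fz|=0$ inside $G$.
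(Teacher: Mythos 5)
Your overall strategy --- transplant the Banach-section computation to the order ball space, verify (SC1)--(SC3), and invoke Theorem~\ref{GFPT1} --- is exactly the paper's, and your treatment of (SC1) and (SC2), with multiplication by an integer $N$ satisfying $N(n-m)\ge n$ standing in for division by $1-C$, is essentially sound. The gap is the final step of (SC3). From $n^{i+1}|z-fz|\le N(m+n)\,m^i\,|y-fy|$ for all $i$ you may indeed conclude $k|z-fz|\le |y-fy|$ for every $k\in\N$, i.e.\ $|z-fz|\ll |y-fy|$ for every $y\in S$. But this does \emph{not} force $|z-fz|=0$. In a non-archimedean ordered abelian group there are nonzero positive $\varepsilon$ with $k\varepsilon\le g$ for all $k\in\N$; and letting $y$ run along the orbit only multiplies $|y-fy|$ by powers of the rational $m/n$, which never leaves its archimedean class, so $|z-fz|$ can perfectly well be a nonzero element that is infinitesimal relative to \emph{all} the orbit radii. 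The paper flags precisely this point (``In the present case, we cannot conclude that $d(z,fz)=0$''), and the non-archimedean case is the only one with content here: by Lemma~\ref{asco}, an archimedean group with spherically complete order ball space is already $\Z$ or $\R$. So the claim ``hence $fz=z$'' is false in general, and with it your derivation of (SC3).

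The fix is that (SC3) never required $fz=z$; it only requires $B_z\subseteq B_y$ for each $y\in S$, and the relation $|z-fz|\ll|y-fy|$ you have already proved suffices for that, provided the radius of $B_y$ carries some genuine slack. Telescoping along the orbit and using $z\in B_{f^iy}$ gives, for any $w\in B_z$,
\[
|y-w|\;\le\;\Bigl[\tfrac{n}{n-m}+\bigl(N-\tfrac{n}{n-m}\bigr)(m/n)^i\Bigr]|y-fy|\;+\;N|z-fz|\;,
\]
so if $N(n-m)>n$ \emph{strictly} (your condition $N(n-m)\ge n$ allows equality, which destroys the margin), the infinitesimal term $N|z-fz|$ is absorbed by the positive margin $(N-\tfrac{n}{n-m})(1-(m/n)^i)|y-fy|$, yielding $B_z\subseteq B_y$ without ever claiming that $z$ is fixed. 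This is exactly what the paper does: it picks a rational $C$ strictly between $m/n$ and $1$, derives $d(z,fz)\le (C-\tfrac mn)\,d(x,fx)$ from the infinitesimality, and then checks $B_z\subseteq B_x$ by a direct four-term triangle inequality. The fixed point is produced afterwards by Theorem~\ref{GFPT1}, not inside the verification of (SC3).
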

\begin{proof}
We compute in the divisible hull of $G$. We choose $C\in\Q$ such that
$\frac{m}{n}<C<1$. With $d(x,y):=|x-y|$, the computations of
Section~\ref{sectBFT} remain valid, and we may define the balls $B_x$
in exactly the same way. They will then again satisfy (SC1) and (SC2).
However, we have to work a little bit harder to show that (SC3) holds.
In the present case, we cannot conclude that $d(z,fz)=0$.
But for given $B_x\in\cN$ we can conclude that
\[
d(z,fz)\>\leq\> C^{i-1}\frac{C+1}{1-C}\,d(x,fx)
\]
for all $i>0$ (which means that $d(z,fz)\ll d(x,fx)$). We
choose $i$ so large that
\[
C^{i-1}\frac{C+1}{1-C} \>\leq\> C-\frac{m}{n}\;.
\]
Then for every $y\in B_z\,$,
\[
d(z,y)\>\leq\>\frac{d(z,fz)}{1-C}\>\leq\>C^{i-1}\frac{C+1}{1-C}\,d(x,fx)
\>\leq\> \left(C-\frac{m}{n}\right)d(x,fx)\;.
\]
Using that $z\in\bigcap\cN\subseteq B_{f^2x}\,$, we compute:
\begin{eqnarray*}
d(x,y) & \leq & d(x,fx)\,+\,d(fx,f^2x)\,+\,d(f^2x,z)\,+\,d(z,y)\\
 & \leq & \left(1+\frac{m}{n}\right)d(x,fx)\,+\,\frac{C^2}{1-C}\,d(x,fx)
\,+\,\left(C-\frac{m}{n}\right)d(x,fx) \\
 & = & (1+C)d(x,fx)\,+\,\frac{C^2}{1-C}\,d(x,fx)
\>=\>\frac{d(x,fx)}{1-C}\;.
\end{eqnarray*}
Hence $y\in B_x\,$. We have proved that $B_z\subseteq B_x\,$, as
required. Altogether, we have shown that $f$ is self-contractive. Now
the assertion of our theorem follows from Theorem~\ref{GFPT1}.
\end{proof}

We leave it to the reader to formulate a corresponding version of
Theorem~\ref{GFPT2}.

\pars
In the case of an ordered field $(K,<)$, we may actually give a slightly
more general definition of ``strictly contracting on orbits'' by
requiring that there is an element $C\in K$ such that $0<C<1$, $v(1-C)
=1$, and $|fx-f^2x|\leq C|x-fx|$ for all $x\in X$, as this condition in
fact implies the condition of the original definition; indeed, the
condition ``$v(1-C)=1$'' means that $C$ is not infinitesimally close to
$1$ and therefore, there is some $C'\in\Q$ such that $C\leq C'<1$.
Working in the ordered additive group of the field, one then immediately
obtains from Theorem~\ref{OAG} the corresponding version for ordered
fields.

At first glance, spherical completeness of the order ball space appears
to be a very strong condition. We note:

\begin{lemma}                               \label{asco}
An archimedean ordered abelian group has a spherically complete
order ball space if and only if it is isomorphic to the additive group
of the integers or of the reals with the canonical ordering.
\end{lemma}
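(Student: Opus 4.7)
The plan is to invoke H\"older's theorem to identify the archimedean ordered abelian group $G$ with an ordered subgroup of $(\R,+,<)$, and then to split the argument along the standard dichotomy: a nontrivial archimedean ordered abelian group either has a smallest positive element (and is then cyclic, hence isomorphic to $\Z$) or is order-dense in $\R$. Since every ordered-group isomorphism preserves order balls, it suffices to verify the spherical-completeness statement for the canonical copies of $\Z$ and $\R$.

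For the ``if'' direction, if $G\cong\Z$ then every order ball $B_o(g;r)$ is a finite set of integers. By Lemma~\ref{wosn} every nest of balls admits, in the ordering given by reverse inclusion, a cofinal well-ordered subnest; along such a subnest the cardinalities form a non-increasing sequence of positive integers that must eventually stabilize, so the intersection equals the eventual smallest ball and is nonempty. If instead $G\cong\R$, then the order balls are exactly the closed bounded intervals $[g-r,g+r]$, which are compact in the Euclidean topology, and a nest of nonempty compact subsets of $\R$ automatically has nonempty intersection by the finite intersection property.

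For the ``only if'' direction, assume $(G,\cB_o)$ is spherically complete and identify $G$ with its image in $\R$ under a H\"older embedding. The cyclic case immediately yields $G\cong\Z$, so suppose $G$ is dense in $\R$; I claim that then $G=\R$. Supposing for contradiction that some $\alpha\in\R\setminus G$ existed, I would construct inductively a sequence of order balls $B_o(g_n;r_n)\in\cB_o$ with $r_n\to 0$, each containing $\alpha$ (as a real number) in its interior, and with $B_o(g_{n+1};r_{n+1})$ strictly inside $B_o(g_n;r_n)$. At each step, density of $G$ in $\R$ lets one choose $g_{n+1}\in G$ arbitrarily close to $\alpha$ and then $r_{n+1}\in G$ larger than $|g_{n+1}-\alpha|$ but small enough to enforce both the strict nesting and $r_{n+1}<r_n/2$. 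The resulting nest has intersection $\{\alpha\}$ in $\R$, so its intersection in $G$ is empty, contradicting spherical completeness. The main obstacle is precisely this inductive construction: both the centers $g_{n+1}$ and the radii $r_{n+1}$ must be chosen inside $G$ rather than $\R$, and the density of $G$ in $\R$ is exactly what supplies the needed slack to do both simultaneously at every stage.
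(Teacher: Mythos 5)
Your proposal is correct and follows essentially the same route as the paper's proof: H\"older embedding, the discrete/dense dichotomy, finiteness of balls for $\Z$, compactness of closed bounded intervals for $\R$, and, for the converse, a nest of order balls with centers and radii in $G$ shrinking onto an arbitrary real number. The only differences are cosmetic (your contradiction phrasing versus the paper's direct construction of sequences $(g_i)$ and $(r_i)$ in $G$ with $\bigcap_i B_o(g_i+r_i;r_i)\subseteq\{a\}$).
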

\begin{proof}
First, observe that every nest $\cN$ of order balls in the integers
contains a smallest ball, so the integers have a spherically complete
order ball space.

Next, take a nest $\cN$ of order balls in the reals. Pick a ball $B$ in
this nest and consider the nest $\cN_0$ of all balls in $\cN$ that are
contained in $B$; this nest has the same intersection as $\cN$. The
order ball $B$ is compact in the order topology of $\R$. Since all balls
in $\cN_0$ are closed in the order topology, Lemma~\ref{cHsc} shows that
the intersection of $\cN_0$ is nonempty.

From what we have proved it follows that every ordered abelian group
which is isomorphic to the additive ordered group of the integers or the
reals has a spherically complete order ball space.

For the converse, we use that by the Theorem of H\"older, every
archimedean ordered abelian group $G$ can be embedded in the additive
ordered group of the reals. We identify $G$ with its image under this
embedding and show that if $G$ is spherically complete w.r.t.\ the order
balls, but not isomorphic to $\Z$, then $G=\R$. Since $G$ is not
isomorphic to $\Z$, it is dense in $\R$. Hence for every $a\in\R$ there
is an increasing sequence $(g_i)_{i\in\N}$ in $G$ converging to $a$. By
density, we can also find a decreasing sequence $(r_i)_{i\in\N}$ in $G$
converging to $0$ such that $(g_i+2r_i)_{i\in\N}$ is a decreasing
sequence in $G$ converging to $a$. Then $\{B_o(g_i+r_i\,;r_i)\mid
i\in\N\}$ is a nest of order balls in $G$. Since $G$ has a spherically
complete order ball space, this nest has a nonempty intersection. As
this intersection can only contain the element $a$, we find that $a\in G$.
\end{proof}

One might think that spherical completeness w.r.t.\ the order balls
implies cut completeness, in which case $\Z$ and $\R$ would be the only
ordered groups with this property. But fortunately, this is not the
case. In \cite{[S]}, Saharon Shelah has shown that every ordered field
is contained in one that has a spherically complete order ball space
(see also \cite{[KKS]} for a power series field construction of such
fields). So there are arbitrarily large ordered fields (and hence also
ordered abelian groups) in which our above fixed point theorem holds.

We will study the structure of such ordered fields and abelian groups
more closely in a subsequent paper. But we include some basic facts
here. The next lemma shows that spherical completeness of the order
ball space is stronger than spherical completeness of the
ultrametric ball space.

\begin{proposition}                               \label{scoscu}
If an ordered abelian group $(G,<)$ has a spherically complete order
ball space, then it has a spherically complete ultrametric ball space.
\end{proposition}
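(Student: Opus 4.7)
The plan is to translate every nest of ultrametric balls into a nest of order balls with the same intersection; spherical completeness of the order ball space then immediately supplies a common point. The two structural ingredients we rely on are: every ultrametric ball is a coset $x+H$ of a convex subgroup of $G$ (as recalled in Section~\ref{sectprel}); and if $H'\subsetuneq H$ are convex subgroups of $G$, then every positive element of $H\setminus H'$ strictly exceeds every positive element of $H'$ (otherwise, by convexity of $H'$, it would itself lie in $H'$).

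So take a nest $\cN_u$ of ultrametric balls. If $\cN_u$ has a smallest element, its intersection is nonempty and we are done, so we assume otherwise. By Lemma~\ref{wosn} we may replace $\cN_u$ by a cofinal well-ordered subnest with the same intersection, and thus write $\cN_u=\{B_\nu\mid \nu<\lambda\}$ with $B_\mu\subsetuneq B_\nu$ whenever $\nu<\mu<\lambda$, for some limit ordinal $\lambda$. Writing $B_\nu=x_\nu+H_\nu$, the strict inclusion of cosets of subgroups forces $H_\mu\subsetuneq H_\nu$.

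For each $\nu<\lambda$ we pick a positive element $g_\nu\in H_\nu\setminus H_{\nu+1}$ and set $B_o^\nu:=B_o(x_{\nu+1};g_\nu)$. A short verification using the observations above yields the sandwich
\[
B_{\nu+1}\,\subseteq\, B_o^\nu\,\subseteq\, B_\nu .
\]
For the left inclusion, any $z\in B_{\nu+1}$ satisfies $|z-x_{\nu+1}|\in H_{\nu+1}$, which is dominated by $g_\nu$; for the right, $z-x_{\nu+1}\in[-g_\nu,g_\nu]\subseteq H_\nu$ combined with $x_{\nu+1}-x_\nu\in H_\nu$ yields $z-x_\nu\in H_\nu$. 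Consequently, for $\nu<\nu'<\lambda$ we have $B_o^{\nu'}\subseteq B_{\nu'}\subseteq B_{\nu+1}\subseteq B_o^\nu$, so $\{B_o^\nu\mid\nu<\lambda\}$ is a nest of order balls in $(G,\cB_o)$.

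By the hypothesis on $(G,\cB_o)$ this order ball nest has a nonempty intersection, and since $B_o^\nu\subseteq B_\nu$ for all $\nu$, this intersection is contained in $\bigcap\cN_u$. The main subtlety overcome above is choosing a single radius $g_\nu$ that is simultaneously large enough to engulf $B_{\nu+1}$ yet small enough to keep $B_o^\nu$ inside $B_\nu$; the convexity of the subgroups $H_\nu$ is exactly what makes such a $g_\nu$ available, and without the passage through a well-ordered cofinal subnest (so that a true successor $\nu+1$ is always at hand) there would be no natural candidate at all.
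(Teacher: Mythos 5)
Your proof is correct and follows essentially the same route as the paper: pass to a cofinal well-ordered subnest, interleave order balls centred at the next ultrametric ball's centre, and use the sandwich $B_{\nu+1}\subseteq B_o^\nu\subseteq B_\nu$ to transfer spherical completeness. Your generic positive radius $g_\nu\in H_\nu\setminus H_{\nu+1}$ is exactly the paper's concrete choice $|x_\nu-y_\nu|$, so the two arguments coincide up to the coset-of-convex-subgroup phrasing.
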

\begin{proof}
Assume that the ordered abelian group $(G,<)$ has a spherically complete
order ball space and take a nest $\cN$ of ultrametric balls in $G$. If
this nest contains a smallest ball, then its intersection is nonempty
and there is nothing to show. So we assume that $\cN$ does not contain a
smallest ball.

In view of Lemma~\ref{wosn}, we may assume that the nest is of the form
$\{B_u(x_\mu,y_\mu)\mid \mu<\kappa\}$, where $\kappa$ is a regular
cardinal (i.e., equal to its own cofinality), and $B_u(x_\nu,y_\nu)
\subsetuneq B_u(x_\mu,y_\mu)$ whenever $\mu<\nu<\kappa$. The latter
implies that $v(x_\nu-y_\nu)<v(x_\mu-y_\mu)$. For each $\mu<\kappa$, we
define an order ball
\[
B^\mu\>:=\>B_o(x_{\mu+1}\,;|x_\mu-y_\mu|)\>\subseteq B_u(x_\mu,y_\mu)\;.
\]
Since $v(x_{\mu+1}-y_{\mu+1})<v(x_\mu-y_\mu)$ implies that $|x_{\mu+1}
-y_{\mu+1}| \ll |x_\mu-y_\mu|$, we find that $B_u(x_{\mu+1}, y_{\mu+1})
\subseteq B^\mu$. It follows that $\{B^\mu\mid \mu<\kappa\}$ is a nest
of balls and its intersection is equal to the intersection of $\cN$.
Since $G$ has a spherically complete order ball space by assumption,
this intersection is nonempty.
\end{proof}

Every ordered field contains the field $\Q$ of rational numbers as a
subfield. We use this fact in the following lemma:

\begin{lemma}                               \label{qobrfR}
An ordered field is spherically complete w.r.t.\ the order balls
\begin{equation}                            \label{qordballs}
B_o(q\,;r)\,,\quad q,r\in\Q \;\mbox{ with }\; r>0\;,
\end{equation}
if and only if its residue field under the natural valuation is $\R$.
\end{lemma}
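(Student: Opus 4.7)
For the forward direction ($\Rightarrow$), I will take an arbitrary $a\in\R$ and produce $z\in{\cal O}$ with residue $a$, which shows $a\in Kv$ and hence $Kv=\R$. Using the density of $\Q$ in $\R$, choose rational sequences $(q_i)_{i\in\N}$ and $(r_i)_{i\in\N}$ with $r_i>0$, $r_i\searrow 0$, $q_i\nearrow a$ and $q_i+2r_i\searrow a$; in $\R$ the balls $B_o(q_i+r_i;\,r_i)=[q_i,\,q_i+2r_i]$ then form a nest shrinking to $\{a\}$. The inclusions $B_o(q_{i+1}+r_{i+1};\,r_{i+1})\subseteq B_o(q_i+r_i;\,r_i)$ reduce to the inequalities $q_i\leq q_{i+1}$ and $q_{i+1}+2r_{i+1}\leq q_i+2r_i$, which are rational inequalities and therefore hold in $K$, so the same formulas define a nest of rational order balls in $K$. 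By hypothesis this nest has a common point $z\in K$; since $|z|\leq |q_0|+2r_0\in\Q$, we get $z\in{\cal O}$, and the residue map, which is the identity on $\Q$ because the natural valuation is trivial on any archimedean subfield, sends the inequalities $q_i\leq z\leq q_i+2r_i$ to $q_i\leq\bar z\leq q_i+2r_i$ in $Kv\subseteq\R$; squeezing as $i\to\infty$ gives $\bar z = a$.

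For the reverse direction ($\Leftarrow$), assume $Kv=\R$ and take a nest $\cN=\{B_o(q_\mu;r_\mu)\}$ of rational order balls in $K$; if $\cN$ contains a smallest ball we are done, so assume it does not. The associated balls $B_\mu^{\R}:=[q_\mu-r_\mu,\,q_\mu+r_\mu]$ in $\R$ are compact, all contained in any fixed member of the nest, and still form a nest; hence Lemma~\ref{cHsc} shows that their intersection is a nonempty closed interval $I=[s,t]\subseteq\R$. I split into two cases. If $I$ contains a rational $a$, then $a\in\Q\subseteq K$ and the rational inequalities $|a-q_\mu|\leq r_\mu$ transfer to $K$, so $a\in\bigcap\cN$. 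Otherwise $I$ contains no rationals, which forces $I=\{a\}$ with $a$ irrational; pick $z\in{\cal O}$ with $\bar z=a$, using $Kv=\R$. For each $\mu$ the rational endpoints $q_\mu\pm r_\mu$ differ from $a$, so $|a-q_\mu|<r_\mu$ strictly in $\R$; then $r_\mu-|z-q_\mu|\in{\cal O}$ has residue $r_\mu-|a-q_\mu|>0$ and is therefore positive in $K$, giving $|z-q_\mu|<r_\mu$ in $K$ and so $z\in\bigcap\cN$.

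The main obstacle is the subtle mismatch between residues and actual values that arises in the reverse direction: a naive lift of any $a\in I$ to $K$ only yields $|z-q_\mu|\leq r_\mu$ up to an infinitesimal error, which need not lie in the (non-strict) ball. The two-case split sidesteps this: the rational case stays inside $\Q\subseteq K$ where no infinitesimal error is incurred, while the irrational case exploits the observation that an irrational $a$ cannot coincide with any rational endpoint $q_\mu\pm r_\mu$, so the strict inequality $|a-q_\mu|<r_\mu$ comfortably absorbs the infinitesimal discrepancy after lifting through the residue map.
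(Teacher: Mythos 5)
Your proof is correct. The forward direction is essentially the paper's argument: build a nest of rational order balls whose real counterparts shrink to $\{a\}$, transfer it to $K$ (the inclusions being rational inequalities), apply spherical completeness, and squeeze the residue of the resulting common point. In the reverse direction you and the paper both push the nest down to $Kv=\R$ and use compactness (Lemma~\ref{asco}, or Lemma~\ref{cHsc} directly) to get a nonempty real intersection $I$, but you resolve the central subtlety --- that a lift $z$ of a point $a\in I$ a priori satisfies $|z-q_\mu|\leq r_\mu$ only ``up to an infinitesimal'' --- by a different device. The paper lifts an arbitrary point of $I$ and, when the lift misses some ball $B_o(q_j;r_j)$, falls back on the rational endpoint $q_j+r_j$, showing that this endpoint (which equals its own residue) must lie in every ball of the nest. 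You instead split on whether $I$ contains a rational: if so, that rational already lies in $\bigcap\cN$ because the defining inequalities are rational and transfer verbatim to $K$; if not, $I$ is a single irrational point, hence lies \emph{strictly} inside every real ball, and the strict inequality survives lifting because an element of the valuation ring with strictly positive residue is positive in $K$. Your rational/irrational dichotomy is arguably cleaner and more symmetric than the paper's endpoint-correction argument; both are sound, and both ultimately rest on the same two transfer principles (rational inequalities hold in $K$ iff they hold in $\Q$, and the residue map is order-preserving and surjective onto $\R$).
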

\begin{proof}
Assume that the ordered field $(K,<)$ is spherically complete w.r.t.\
the order balls of the form (\ref{qordballs}). We pick some $a\in \R$
and wish to show that $a\in Kv$. Using the fact that $\Q$ is dense in
$\R$, we take a nest $\{B_o(g_i+r_i\,;r_i) \mid i\in\N\}$ of order balls
in $G=\Q\subseteq Kv$ as in the proof of Lemma~\ref{asco}. Taken as a
nest in $Kv$, its intersection is either empty or contains only $a$.

The residue map from ${\cal O}$ to $Kv$ induces an isomorphism from the
subfield $\Q\subseteq {\cal O}$ of $K$ onto the subfield $\Q$ of $\R$.
Via this isomorphism, we can see the elements $g_i$ and $r_i$ as
elements of $K$, and taken in $K$, $\{B_o(g_i\,;r_i) \mid i\in\N\}$ is a
nest of order balls of the form (\ref{qordballs}). By assumption, this
nest has a nonempty intersection; take $b\in K$ to lie in this
intersection. Then its residue lies in the intersection of the nest
$\{B_o(g_i\,;r_i) \mid i\in\N\}$ in $Kv$ and therefore must be equal to
$a$. We have shown that $a\in Kv$, and as it was an arbitrary element of
$\R$, we find that $Kv=\R$.

\pars
For the converse, assume that the ordered field $(K,<)$ has residue
field $\R$ under its natural valuation. Take a nest $\{B_o(q_i\,;r_i)
\mid i\in I\}$ in $K$ of balls of the form (\ref{qordballs}). Via the
residue map, we can view $q_i,r_i$ as elements of $Kv$, and we can view
$\{B_o(q_i\,;r_i) \mid i\in I\}$ as a nest of order balls in $Kv=\R$. By
Lemma~\ref{asco}, it has a nonempty intersection. Since the residue
map is surjective, there is an element $a\in K$ whose residue lies in
this intersection. We can take $a$ in the ball $B_o(q_i\,;r_i)$ in $K$,
for some $i\in I$. Suppose that $a$ does not lie in the intersection of
the nest in $K$. Then there is a smaller ball $B_o(q_j\,;r_j)$ in which
$a$ does not lie. Since the residue of $a$ lies in the intersection of
the nest in $Kv$, there must be some $a'$ in the ball $B_o(q_j\,;r_j)$
in $K$ which has the same residue as $a$. We assume without loss of
generality that $a>q_j+r_j\,$; the case of $a<q_j-r_j$ is symmetrical.
Then $a'<q_j+r_j<a$, and as $a$ and $a'$ have the same residue, this
must also be the residue of $b:=q_j+r_j$. Now we show that $b$ lies in
the intersection of the nest in $K$. If this were not true, then there
would exist a ball $B_o(q_k\,;r_k)$ which does not contain $b$, so
$b>q_k+r_k$ or $b<q_k-r_k$. Since $b,q_k,r_k\in\Q,$ it would follow that
no element in $B_o(q_k\,;r_k)$ could have the same residue as $b$ and
$a$, which leads to a contradiction. So we find that also the nest in
$K$ has a nonempty intersection. This proves that $(K,<)$ is spherically
complete w.r.t.\ the order balls of the form (\ref{qordballs}).
\end{proof}

From Proposition~\ref{scoscu} and Lemma~\ref{qobrfR}, we obtain the
following theorem.
\begin{theorem}
Every ordered field with a spherically complete order ball space is
isomorphic to a power series field with residue field $\R$.
\end{theorem}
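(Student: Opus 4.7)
The plan is to assemble three previously established results. Suppose $(K,<)$ is an ordered field whose order ball space is spherically complete; write $v$ for its natural valuation, $Kv$ for the residue field, and $vK$ for the (multiplicatively written) value group. The goal is to show that the canonical embedding of $K$ into a Hahn--type power series field over $Kv$ with exponents in $vK$ is surjective and requires no nontrivial factor set.

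First I would pin down the residue field. Spherical completeness of $(K,\cB_o)$ certainly implies spherical completeness with respect to the restricted family of order balls $B_o(q;r)$ with $q,r\in\Q$, $r>0$, so Lemma~\ref{qobrfR} applies and gives $Kv=\R$. Second, I invoke Proposition~\ref{scoscu}: the hypothesis also implies that the natural ultrametric ball space $(K,\cB_u)$ is spherically complete.

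Next I appeal to the Kaplansky--Krull embedding result cited at the end of Section~\ref{sectprel} (\cite[Theorem 6]{[KA]}): $K$ embeds into the power series field with value group $vK$ and residue field $Kv=\R$, possibly via a nontrivial factor set. Here I use the observation made in that same paragraph that no factor set is needed provided the positive cone of $Kv$ is closed under taking radicals; since $Kv=\R$ and every positive real admits an $n$-th root in $\R$ for every $n\geq 1$, this hypothesis is verified. Thus $K$ embeds into the genuine power series field $\R((vK))$. Finally, since the embedding is value-preserving and $Kv=\R$, and since the ultrametric ball space of $K$ is spherically complete, the same cited theorem tells us that the embedding is onto. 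Hence $K$ is isomorphic (as an ordered field, via its natural valuation) to a power series field with residue field $\R$.

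The only step that requires genuine care is checking that no factor set intervenes: this rests entirely on the remark about radical closure of the positive residue cone, which is immediate for $Kv=\R$. Everything else is a direct invocation of Lemma~\ref{qobrfR}, Proposition~\ref{scoscu}, and the quoted Kaplansky theorem, so there is no substantive calculation to perform.
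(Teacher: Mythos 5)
Your proof is correct and follows essentially the same route the paper intends: the theorem is stated as an immediate consequence of Proposition~\ref{scoscu} and Lemma~\ref{qobrfR}, combined with the Kaplansky embedding facts recalled at the end of Section~\ref{sectprel}, which is exactly the assembly you carry out. The one detail you make explicit --- that no nontrivial factor set is needed because the positive part of the residue field $\R$ is closed under radicals --- is the right observation and matches the paper's parenthetical remark.
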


The converse is not true. For example, the power series field with value
group $\Q$ and residue field $\R$ does not have a spherically complete
order ball space. We will give a full characterization of the
ordered fields with spherically complete order ball spaces in a
subsequent paper, making use of additional conditions on the value
group.

%
%
\subsection{Hybrid ball spaces}          \label{secthyb}
Using the flexibility of our notion of ball space, we will now give a
simple characterization of the ordered fields that are power series
fields with residue field $\R$. We just have to enlarge the underlying
ultrametric ball space of an ordered field by a suitable set of order
balls. We will make use of two very easy principles:

\begin{lemma}                               \label{sb,ub}
1) \ If the ball space $(X,\cB)$ is spherically complete and
$\cB'\subseteq \cB$, then also $(X,\cB')$ is spherically complete.
\sn
2) \ If $\cB_i\,$, $1\leq i\leq n$, are collections of subsets of $X$,
then the ball space $(X,\bigcup_{i=1}^{n}\cB_i)$ is spherically complete
if and only if all of the ball spaces $(X,\cB_i)$, $1\leq i\leq n$, are
spherically complete.
\end{lemma}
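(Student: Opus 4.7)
The plan is to separate out the trivial content from the one place where finiteness matters. For part~1), any nonempty nest of nonempty balls in $\cB'$ is already such a nest in $\cB$, so spherical completeness of $(X,\cB)$ immediately hands us a nonempty intersection; I would dispatch this in one line. The ``only if'' direction of part~2) is then just part~1) applied with $\cB'=\cB_i$ and the ambient ball space $\bigcup_{j=1}^n\cB_j\,$, another one-liner.

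The real content is the ``if'' direction of part~2). Given a nonempty nest $\cN$ in $\bigcup_{i=1}^n\cB_i\,$, I would split it as $\cN=\bigcup_{i=1}^n\cN_i$ where $\cN_i:=\cN\cap\cB_i\,$, and aim to show that some $\cN_i$ is cofinal in $\cN$ with respect to reverse inclusion (meaning that every $B\in\cN$ has a subset in $\cN_i$). The point is that, once cofinality is in hand, $\cN_i$ is automatically a nonempty nest of nonempty balls in $\cB_i\,$, so spherical completeness of $(X,\cB_i)$ yields $\bigcap\cN_i\ne\emptyset$, and cofinality forces $\bigcap\cN_i=\bigcap\cN$ (the inclusion ``$\supseteq$'' being trivial, and ``$\subseteq$'' following because every $B\in\cN$ contains some ball from $\cN_i$).

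The key step, and the only place where the hypothesis $n<\infty$ enters, is the pigeonhole argument that produces a cofinal $\cN_i\,$. I would argue by contradiction: if no $\cN_i$ were cofinal, then for each $i$ I could choose a ball $B_i\in\cN$ containing no element of $\cN_i$ as a subset. The finite collection $\{B_1,\ldots,B_n\}$ is totally ordered by inclusion, hence has a smallest element $B=B_k\,$, so $B\subseteq B_i$ for all $i$. But $B\in\cN$ belongs to some $\cN_j\,$, and then $B\in\cN_j$ with $B\subseteq B_j$ contradicts the choice of $B_j\,$. This pigeonhole is the main (and really the only) obstacle; everything else is bookkeeping. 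The argument clearly breaks for infinite unions, which is consistent with the lemma's restriction to finite $n$.
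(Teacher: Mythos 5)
Your proof is correct and follows essentially the same route as the paper, which leaves the details to the reader but explicitly hints that one should find an $i$ and a subnest lying in $\cB_i$ that is cofinal under reverse inclusion. Your pigeonhole argument (taking the smallest of the finitely many witnesses $B_1,\ldots,B_n$ to non-cofinality) correctly fills in the one nontrivial step, and you rightly identify that this is precisely where finiteness of $n$ is used.
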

\sn
We leave the easy proofs to the reader. Just note that for every nest of
balls in $(X,\bigcup_{i=1}^{n}\cB_i)$ there must be an $i$ and a cofinal
(under reverse inclusion) subnest of balls that all lie in $\cB_i\,$.

\pars
We take an ordered field $(K,<)$ and define the \bfind{hybrid ball space
of $(K,<)$} to be $K$ together with the union of the set of ultrametric
balls with the set of order balls of the form (\ref{qordballs}), that
is,
\[
\cB_h(K,<)\>:=\> \cB_u(G,<)\,\cup\,\{B_o(q;r)\mid q,r\in\Q \mbox{ with }
r>0\}\;.
\]
From Proposition~\ref{scoscu} and Lemma~\ref{sb,ub} it follows that if
the order ball space of an ordered field is spherically complete, then
so is its hybrid ball space.

From Lemmas~\ref{qobrfR} and~\ref{sb,ub}, we obtain the following
theorem:

\begin{theorem}                             \label{charpsfR}
An ordered field $(K,<)$ is isomorphic to a power series field with
residue field $\R$ if and only if its hybrid ball space $(K,\cB_h(K,<))$
is spherically complete.
\end{theorem}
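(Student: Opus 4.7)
The plan is to prove both directions by decomposing the hybrid ball space $\cB_h(K,<)$ into its two constituent families, the ultrametric balls $\cB_u(K,<)$ and the rational order balls $\{B_o(q;r)\mid q,r\in\Q,\ r>0\}$, and then applying Lemma~\ref{sb,ub} together with the characterizations already available from Lemma~\ref{qobrfR} and from \cite{[KA]}.

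For the ``if'' direction, assume $(K,<)$ is isomorphic to a power series field with residue field $\R$. Then the ultrametric ball space $(K,\cB_u(K,<))$ is spherically complete, because the fact cited in Section~\ref{sectprel} from \cite[Theorem~6]{[KA]} tells us that the ultrametric ball space of $K$ is spherically complete precisely when the canonical embedding of $K$ into the power series field over its value group and residue field is onto. Moreover, since the residue field is $\R$, Lemma~\ref{qobrfR} yields that $K$ is spherically complete with respect to the rational order balls. By Lemma~\ref{sb,ub}(2) applied to the two families whose union is $\cB_h(K,<)$, the hybrid ball space is spherically complete.

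For the ``only if'' direction, assume $(K,\cB_h(K,<))$ is spherically complete. Since $\cB_u(K,<)\subseteq \cB_h(K,<)$, Lemma~\ref{sb,ub}(1) gives that $(K,\cB_u(K,<))$ is spherically complete; by \cite[Theorem~6]{[KA]}, $K$ is isomorphic as a valued field to the power series field over its value group with coefficients in the residue field $Kv$ (note that the positive part of $Kv$ must be closed under radicals for this identification with no nontrivial factor set, which is automatic once we show $Kv=\R$). A second application of Lemma~\ref{sb,ub}(1), this time to the subfamily of rational order balls contained in $\cB_h(K,<)$, shows that $(K,<)$ is spherically complete with respect to these order balls, so Lemma~\ref{qobrfR} yields $Kv=\R$. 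Combining the two conclusions gives that $(K,<)$ is isomorphic to a power series field with residue field $\R$.

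There is no real obstacle here: all the work has already been done in Lemma~\ref{qobrfR}, the citation to \cite{[KA]}, and Lemma~\ref{sb,ub}, and the theorem is essentially a clean packaging of these ingredients. The only mildly delicate point is the remark about the factor set in the power series representation, which is harmless since $\R_{>0}$ is closed under taking $n$-th roots and so no nontrivial factor set is required.
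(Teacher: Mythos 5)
Your proof is correct and takes essentially the same route as the paper, which derives the theorem from Lemmas~\ref{qobrfR} and~\ref{sb,ub} together with the fact from \cite[Theorem 6]{[KA]} recalled in Section~\ref{sectprel} (ultrametric spherical completeness $\Leftrightarrow$ the power series embedding is onto), exactly the ingredients you assemble. Your additional remark that the factor set is trivial because $\R_{>0}$ is closed under radicals is a point the paper leaves implicit, but it does not change the argument.
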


A different characterization of power series fields with residue field
$\R$ was obtained by Ron Brown (cf.\ \cite[Theorem 1.7]{b}), using the
notion of ``ultracomplete w.r.t.\ an extended absolute value''. This
notion appears to be closely related to spherical completeness of
suitable ball spaces; details will be worked out in a subsequent paper.

\newcommand{\lit}[1]{\bibitem{#1}}

\end{document}